\documentclass[12pt, reqno]{amsart}

\usepackage{amsmath,times,epsfig,amssymb,amsbsy,amscd,amsfonts,amstext,color,bm}
\usepackage[arrow,matrix]{xy}
\usepackage{tikz}
\usepackage{amsthm}
\usepackage{enumitem}
\usepackage[numbers]{natbib}
\numberwithin{equation}{section}

\theoremstyle{plain}
\newtheorem{theorem}{Theorem}[section]

\newtheorem{lemma}[theorem]{Lemma}

\newtheorem{proposition}[theorem]{Proposition}

\theoremstyle{definition}
\newtheorem{definition}[theorem]{Definition}

\theoremstyle{remark}
\newtheorem{remark}[theorem]{Remark}

\newcommand{\A}{\mathcal{A}}

\newcommand{\D}{\mathcal{D}}
\newcommand{\Z}{\mathbb{Z}}

\newcommand{\C}{\mathbb{C}}

\newcommand{\res}{\mathbf{res}}

\newcommand{\Cat}{\operatorname{Cat}}
\newcommand{\Shi}{\operatorname{Shi}}
\newcommand{\Der}{\operatorname{Der}}

\newcommand{\cShi}{\mathbf{c}\Shi}

\newcommand{\cCat}{\mathbf{c}\Cat}

\begin{document}
	
	\title{On the extended Shi arrangement of type $B_{\ell}$}
	
	\begin{abstract}
		Motivated by Kawanoue's construction of a basis for
		the derivation module of the cone of 
		the extended Catalan arrangement of type $B_\ell$, 
		we construct an explicit basis for the
		derivation module of $\cShi(B_\ell,m)$. 
		Our construction generalizes the
		previously known case $m=1$ to $m\geq1$. 
		It also provides a partial answer to a question of Abe and Terao 
		concerning explicit constructions of simple-root bases.	
	\end{abstract}
	
	\author{Zixuan Wang}
	\address{Zixuan Wang, 
		Nanjing Forestry University
	}
	\email{zixuan.wang@njfu.edu.cn}
	
	\subjclass[2020]{52C35, 20F55, 13N15}

\keywords{Hyperplane arrangements, Shi arrangements, Derivation modules }

	\begin{abstract}
		Motivated by Kawanoue's construction of a basis for
		the derivation module of the cone of 
		the extended Catalan arrangement of type $B_\ell$, 
		we construct an explicit basis for the
		derivation module of $\cShi(B_\ell,m)$. 
		Our construction generalizes the
		previously known case $m=1$ to $m\geq1$. 
		It also provides a partial answer to a question of Abe and Terao 
		concerning explicit constructions of simple-root bases.		
	\end{abstract}	
	
	\maketitle

	\section{Introduction}
	Let $V=\oplus_{i=1}^\ell  \C e_i$ be a vector space 
	with dual $V^* =\oplus_{i=1}^\ell  \C x_i $,
	and let $S=\C[x_1,\dots,x_\ell]$ be the polynomial ring.  
	The hyperplane $H$ is an affine subspace of dimension $(\ell-1)$,
	which can be written as the kernel of a polynomial $\alpha_H$ of degree $1$.
	The hyperplane arrangement $\A$ is a finite set of hyperplanes.
	Call $\A$ a central arrangement if all hyperplanes pass through the origin.
	Write $
	\Der (S)=\bigoplus_{i=1}^{\ell}S\partial_{x_i}
	$.
	The derivation module $\D(\A)$ of a central arrangement $\A$ is defined by
	\begin{equation*}
		\D(\A)=\{ \theta \in \mathrm{Der}(S) \mid \theta(\alpha_H) \in \alpha_H S, \text{for \ any \ } H \in \A \}.
	\end{equation*}
	The central arrangement $\A$ is said to be free if $\D(\A)$ is free.
	A derivation is homogeneous of degree $q$ if all its nonzero coefficient
	polynomials are homogeneous of degree $q$.
	The degrees of derivations in a basis of $\D(\A)$ form the exponents of $\A$. 
	For more details, see \cite{OT}.
	
	Let $\Phi$ be a crystallographic root system with a fixed positive system
	$\Phi^+$. 
	After adjoining a coordinate $z$, define the cone of the extended Shi
	arrangement for $m\geq1$ to be the central arrangement
	\[
	\cShi(\Phi,m)=\{z=0\}\cup
	\{\alpha-kz=0 \mid \alpha\in\Phi^+,\ 1-m\leq k\leq m\}.
	\]
	Its deconing in the affine subspace $\{z=1\}$ is the extended Shi arrangement.  
	The case $m=1$ is the Shi arrangement,
	which originated in Shi's study of Kazhdan--Lusztig cells in affine Weyl groups \cite{S}.
	
	Set
	$
	\widetilde S=S[z].
	$
	Write
	$
	\Der\widetilde S=
	\bigoplus_{i=1}^{\ell}\widetilde S\partial_{x_i}
	\oplus\widetilde S\partial_z.
	$
	Choose
	\begin{equation}\label{eq:positive-roots}
		\Phi^+(B_\ell)=
		\{x_i \mid 1\leq i\leq\ell\}
		\cup\{x_i \pm x_j \mid 1\leq i<j\leq\ell\}.
	\end{equation}
	The Coxeter number of the Weyl group of type $B_\ell$ is $h=2\ell$.
	Let $d=mh=2m\ell$.
	Set
	\begin{equation}\label{eq:CR}
		C_m(t,z)=\prod_{k=-m}^{m}(t-kz),
		\qquad
		R_m(t,z)=\prod_{k=1-m}^{m}(t-kz).
	\end{equation}
	The cone of the extended Shi arrangement $\cShi(B_\ell,m)$ is the central
	arrangement in $\C^{\ell+1}$ with defining polynomial
	$
	Q_{\cShi}(x,z)=z\prod_{\alpha\in\Phi^+(B_\ell)}R_m(\alpha(x),z).
	$
	Thus its defining hyperplanes are $\{z=0\}$ and
	$\{\alpha-kz=0\}$, where $\alpha\in\Phi^+(B_\ell)$ and
	$1-m\leq k\leq m$.  
	For a graded module $M$, write
	$M_q$ for its homogeneous degree-$q$ component.
	Let
	$
	\D_0(\cShi(\Phi,m))=
	\{\theta \in \D(\cShi(\Phi,m)) \mid \theta (z)=0\}.
	$
	When $\ell$, $m$, and $\Phi$ are fixed, we abbreviate this module to
	$\D_0(\cShi)$.
	The Euler derivation is
	$
	\theta_E=z\partial_z+\sum_{i=1}^{\ell}x_i\partial_{x_i}.
	$
	
	The freeness of the cones of the extended Shi and extended Catalan arrangements 
	was conjectured by Edelman and Reiner \cite{ER}.  
	The cases of type $A_{\ell-1}$ were obtained 
	in work of Edelman--Reiner and Athanasiadis \cite{Ath,ER}, 
	and Yoshinaga \cite{Y04} proved the conjecture for every crystallographic root system. 
	
	Further developments have been made concerning the construction of bases.
	For the case $m=1$,  
	Suyama and Terao \cite{ST} constructed a basis for the Shi arrangement of type $A_{\ell-1}$, 
	and Suyama \cite{S15} treated it 
	in types $B_\ell$ and $C_\ell$ by Bernoulli-like polynomials.  
	Abe and Terao \cite{AT} introduced simple-root bases 
	for the extended Shi arrangements 
	and characterized them through properties of the Weyl group. 
	They posed the problem of finding the explicit construction of
	simple-root bases for every root system.
	For type $A_{\ell-1}$, 
	Suyama and Yoshinaga \cite{SY} obtained explicit bases from discrete integrals.  
	More recently, 
	Feigin, Wang, and Yoshinaga \cite{FWY}
	gave the integral expressions for derivations of free multiarrangements
	related to complex reflection groups,
	and gave a conjecture concerning an explicit basis of $\cCat(B_2, m)$.  
	Kawanoue \cite{K}
	proved it and extended the results to
	$\cCat(B_\ell,m)$.  
	
	Kawanoue \cite{K} defined the following polynomials and vector fields to construct the basis for the extended Catalan arrangement of type $B_\ell$.
	For $r\in\Z_{\geq0}$, 
	set
	\begin{equation}
		\label{P-poly}
		P_z(a,r)=\prod_{k=-r}^{r}(a-kz).
	\end{equation}
	For $n\in \C$ and $q\in\Z_{\geq0}$, 
	use
	$\binom{n}{q}=n(n-1)\cdots(n-q+1)/q!$.
	For $0\leq c_1,\dots,c_{\ell-1}\leq m$, 
	set
	$C_j=c_1+\cdots+c_j$ and $C_0=0$, 
	and define
	\begin{multline}\label{eq:k-h}
		\widetilde h_{\ell-1,m,0}(x;y_1,\dots,y_{\ell-1};z) =\\
		\sum_{0\leq c_1,\dots,c_{\ell-1}\leq m}
		\frac{(-1)^{C_{\ell-1}}P_z(x,m+C_{\ell-1})}
		{\binom{m+C_{\ell-1}+\frac12}{m+1}}
		\prod_{j=1}^{\ell-1}\binom{m}{c_j}
		\frac{P_z(y_j,m+C_{j-1})}{P_z(y_j,C_j)}.
	\end{multline}
	The quotients are polynomials because
	$m+C_{j-1}-C_j=m-c_j\geq0$.  Every summand has degree $d+1$.
	Set
	\begin{equation}
		\label{eq:k-basis}
		\begin{aligned}
			\widetilde F_{\ell,m,0,i}(x,z)
			&=\widetilde h_{\ell-1,m,0}
			(x_i;x_1,\dots,\widehat{x_i},\dots,x_\ell;z),\\
			\widetilde\kappa_{\ell,m}
			&=\sum_{i=1}^{\ell}\widetilde F_{\ell,m,0,i}(x,z)\partial_{x_i}.
		\end{aligned}
	\end{equation}
	Here a hat denotes omission.  
	This is the degree-$(d+1)$ 
	generator 
	in Kawanoue's homogeneous basis 
	for the extended Catalan arrangement of type $B_\ell$.
	
	For $v\in\C^\ell$ and a polynomial $f(x,z)$,
	define
	\begin{equation}\label{eq:dif}
		\rho_v^z f(x,z)=\frac{f(x,z)-f(x-zv,z)}{z}.
	\end{equation}
	Note that $f(x ,z) - f(x-z v, z) \equiv f(x, 0) - f(x, 0)=0$ $(\mathrm{mod}\  z)$.
	So the quotient is a polynomial.
	If $f$
	is homogeneous of degree $q$, 
	then $\rho_v^z f$ is homogeneous of degree
	$q-1$.
	We can apply $\rho_v^z$ coefficientwise to derivations that annihilate $z$.
	
	For $1\leq r<\ell$, 
	set $\alpha_r=x_r-x_{r+1}$, 
	let
	$\sigma_r=(r\ r+1)$, 
	and let $s_r$ interchange $x_r$ and $x_{r+1}$ 
	while fixing $z$.  
	Its action on the derivation module is
	\begin{equation}
		\label{eq:reflection-action}
		s_r\left(\sum_i^{\ell} f_i\partial_{x_i}+g\partial_z\right)
		=\sum_{i=1}^{\ell}s_r(f_i)\partial_{x_{\sigma_r(i)}}+s_r(g)\partial_z.
	\end{equation}
	Set
	$
	\lambda_{\ell,m}=\frac{(-1)^{\ell m+1}}{4m(m+1)},
	\Phi_0^{(m)}=0,
	\Phi_1^{(m)}=\lambda_{\ell,m}\rho_{e_1}^z
	\widetilde\kappa_{\ell,m}.
	$
	For $1\leq r<\ell$, 
	define recursively in the fraction field of
	$\widetilde S$ by
	\begin{equation}\label{eq:Phi-recursion}
		\Phi_{r+1}^{(m)}=
		\frac{(\alpha_r+mz)\Phi_r^{(m)}
			-(mz-\alpha_r)s_r\Phi_r^{(m)}}{\alpha_r}
		-\Phi_{r-1}^{(m)},
	\end{equation}
	and set
	\begin{equation}\label{eq:Theta-explicit}
		\Theta_i^{(m)}=\Phi_i^{(m)}-\Phi_{i-1}^{(m)}
		\qquad(1\leq i\leq\ell).
	\end{equation}
	
	With the above definitions, the main result of this paper is as follows.
	\begin{theorem}
		\label{thm:main}
		For $\ell\geq2$ and $m\geq1$, the vector fields 
		$
		\theta_E,\Theta_1^{(m)},\dots,\Theta_\ell^{(m)}
		$
		form a homogeneous $\widetilde S$-basis for
		$\D(\cShi(B_\ell,m))$.  
	\end{theorem}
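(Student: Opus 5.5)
The plan is to apply Saito's criterion to the $\ell+1$ derivations $\theta_E,\Theta_1^{(m)},\dots,\Theta_\ell^{(m)}$. By Yoshinaga's theorem, $\cShi(B_\ell,m)$ is free with exponents $1$ and $mh=d$ (with multiplicity $\ell$). Moreover $\D(\cShi)=\widetilde S\theta_E\oplus\D_0(\cShi)$. So it suffices to prove three things. First, each $\Theta_i^{(m)}$ is a homogeneous polynomial derivation of degree $d$ annihilating $z$. Second, each $\Theta_i^{(m)}$ lies in $\D(\cShi)$. Third, the determinant of the coefficient matrix of $\Theta_1^{(m)},\dots,\Theta_\ell^{(m)}$ with respect to $\partial_{x_1},\dots,\partial_{x_\ell}$ is a nonzero constant multiple of $\prod_{\alpha\in\Phi^+}R_m(\alpha,z)$. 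Then the full determinant with $\theta_E$ equals $z$ times that product, which is $Q_{\cShi}$ up to a scalar, and Saito's criterion finishes the proof. The degree count is automatic: $\widetilde\kappa_{\ell,m}$ has degree $d+1$, $\rho^z_{e_1}$ lowers the degree by one, and the recursion \eqref{eq:Phi-recursion} preserves degree.

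\emph{Step 1: $\Phi_1^{(m)}$ is a logarithmic derivation.} Kawanoue's result gives $\widetilde\kappa_{\ell,m}\in\D(\cCat(B_\ell,m))$, and it kills $z$. For $\theta$ with $\theta(z)=0$ one has
\[
\rho_v^z\theta(\alpha)=\bigl(\theta(\alpha)(x)-\theta(\alpha)(x-zv)\bigr)/z .
\]
Here $\theta(\alpha)$ is divisible by $\prod_{k=-m}^m(\alpha-kz)$ modulo the usual multiarrangement corrections. Substituting $x\mapsto x-ze_1$ shifts $\alpha$ by $-\alpha(e_1)z$. For roots with $\alpha(e_1)=0$ both terms remain divisible by $C_m(\alpha,z)$. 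For $\alpha(e_1)=1$ the two terms share the factors $\alpha-kz$ with $1-m\le k\le m$, so the difference lies in $R_m(\alpha,z)\widetilde S$. This is the standard "shift" trick of Abe--Terao: applying $\rho^z$ to a Catalan derivation lands in $\D(\cShi)$ exactly for the roots with $\alpha(e_1)\ge0$, which here is all positive roots because $e_1$ is dominant-like for our choice \eqref{eq:positive-roots}. I will check this root-by-root for $x_i$, $x_i\pm x_j$.

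\emph{Step 2: the recursion preserves polynomiality and logarithmicity.} I will prove by induction on $r$ that $\Phi_{r}^{(m)}\in\D_0(\cShi)$, together with an auxiliary statement, namely the Abe--Terao simple-root property
\[
s_r\Phi_r^{(m)}\equiv\Phi_r^{(m)}\pmod{\alpha_r},
\]
or more precisely a relation between $\Phi_r$, $s_r\Phi_r$ and $\Phi_{r\pm1}$.

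For polynomiality, the numerator $(\alpha_r+mz)\Phi_r-(mz-\alpha_r)s_r\Phi_r$ restricted to $\alpha_r=0$ becomes $mz(\Phi_r-s_r\Phi_r)|_{\alpha_r=0}$. This must vanish, so I need $\Phi_r$ to be $s_r$-invariant on the hyperplane $x_r=x_{r+1}$, meaning its $\partial_{x_r}$ and $\partial_{x_{r+1}}$ coefficients are swapped appropriately. For logarithmicity, I use that $s_r$ permutes the hyperplanes $\alpha-kz$ for $\alpha\ne\alpha_r$. For $\alpha_r$ itself, I will exploit the factors $\alpha_r\pm mz$, which convert the condition for $k$ ranging over $1-m..m$ into the shifted one. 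This is the mechanism in Abe--Terao's characterization of simple-root bases via the operator $\theta\mapsto\bigl((\alpha+mz)\theta-(mz-\alpha)s\theta\bigr)/\alpha$.

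I expect this step to be the main obstacle. The invariance statement for $r=1$ must be verified directly from Kawanoue's explicit formula \eqref{eq:k-h}, using the symmetry of $\widetilde h$ in the $y_j$ after applying $\rho^z_{e_1}$. The inductive propagation also has to be handled, presumably via braid relations $s_rs_{r+1}s_r$ and commutation $s_r\Phi_{r'}$ for $|r-r'|\ge2$.

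\emph{Step 3: the determinant.} Since $\D(\cShi)$ is free with all nontrivial exponents equal to $d$, any $\ell$ elements of $\D_0(\cShi)_d$ form a basis iff they are linearly independent over $\C$ modulo $z$, or equivalently modulo $\widetilde S_{>0}$-combinations. Setting $z=0$ reduces $\D_0(\cShi)_d$ to derivations of the Coxeter arrangement multiplicity $2m$, where $\Theta_i|_{z=0}$ should be triangular relative to a known basis. Concretely I will show $\Theta_1|_{z=0},\dots,\Theta_\ell|_{z=0}$ span a space of dimension $\ell$, e.g. by computing leading terms in $x_1$ and using that the recursion at $z=0$ reduces to $\Phi_{r+1}=\Phi_r+s_r\Phi_r-\Phi_{r-1}$. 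The normalization $\lambda_{\ell,m}$ is irrelevant for independence, and nonvanishing of $\Phi_1|_{z=0}$ follows from the nonzero top coefficient in \eqref{eq:k-h}. Independence plus degree count then gives the determinant as a nonzero multiple of $Q_{\cShi}/z$, completing Saito's criterion.
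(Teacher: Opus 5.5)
Your overall plan (shift trick for $\Phi_1^{(m)}$, propagate through the recursion, conclude by freeness) is reasonable, and your Step 2 takes a different route from the paper. The paper never checks directly that the recursion preserves $\D_0(\cShi)$. Instead it shows $\Phi_1^{(m)}=\res_d^{-1}(\theta_1^{(m)})=\varphi_1^+$. It then uses Abe--Terao's results, namely $s_r\varphi_j^+=\varphi_j^+$ for $j\neq r$ and the $W$-invariance of the $m$-Euler derivation, to see that the recursion reproduces $\varphi_{r+1}^+$, so polynomiality and logarithmicity are inherited.

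Your direct route works and is easier than you expect:
\begin{itemize}
\item Because $\{\alpha_r=0\}\in\cShi$, the difference $\Phi_r^{(m)}-s_r\Phi_r^{(m)}$ is automatically coefficientwise divisible by $\alpha_r$. That is your ``auxiliary statement''; it needs no input from Kawanoue's formula and no braid relations.
\item For $\alpha\neq\alpha_r$, use that $s_r$ permutes $\Phi^+\setminus\{\alpha_r\}$.
\item For $\alpha_r$, write $g=\Phi_r^{(m)}(\alpha_r)$. The numerator evaluated on $\alpha_r$ is $(\alpha_r+mz)g+(mz-\alpha_r)s_r(g)$, which is divisible by $C_m(\alpha_r,z)$. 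Since $\alpha_r\mid g$, it is also divisible by $\alpha_r^2$. You need this last fact to keep the factor $k=0$ after dividing by $\alpha_r$, and you did not address it.
\end{itemize}
Also, in Step 1 the right condition is $\alpha(e_1)\in\{0,1\}$, not $\alpha(e_1)\geq 0$.

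The genuine gap is Step 3. Nothing in your proposal proves that $\res\Theta_1^{(m)},\dots,\res\Theta_\ell^{(m)}$ are linearly independent. The ``known basis'' is never named, ``leading terms in $x_1$'' is not an argument, and nonvanishing of $\res\Phi_1^{(m)}$ says nothing about the other $\ell-1$ vectors.

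The missing idea is to identify $\res\Phi_1^{(m)}$ exactly, in three steps:
\begin{itemize}
\item Taylor expansion gives $\res(\rho_{e_1}^z\eta)=\bigtriangledown_{\partial_{x_1}}\res(\eta)$.
\item A coefficientwise comparison using the beta integral $\int_0^1 t^{2C}(t^2-1)^m\,dt$ gives $\res\widetilde\kappa_{\ell,m}=(-1)^{\ell m}2(m+1)\eta_0^{(m)}$, where $\eta_0^{(m)}=\sum_i\bigl(\int_0^{x_i}\prod_k(t^2-x_k^2)^m\,dt\bigr)\partial_{x_i}$.
\item Differentiating under the integral gives $\bigtriangledown_{\partial_{x_1}}\eta_0^{(m)}=-2m\theta_1^{(m)}$.
\end{itemize}
This is exactly what $\lambda_{\ell,m}$ is normalized for, and it yields $\res\Phi_1^{(m)}=\theta_1^{(m)}$, a member of the Feigin--Wang--Yoshinaga basis of $\D(B_\ell,2m)$.

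Your own $z=0$ recursion, together with $s_r\theta_i^{(m)}=\theta_{\sigma_r(i)}^{(m)}$, then gives $\res\Phi_r^{(m)}=\theta_1^{(m)}+\cdots+\theta_r^{(m)}$. Hence $\res\Theta_i^{(m)}=\theta_i^{(m)}$, and the FWY theorem supplies the independence. In fact, once the restrictions form a basis of the Ziegler restriction, the Ziegler--Yoshinaga restriction theorem finishes directly, with no determinant computation. Without this identification, and without invoking the FWY basis theorem, Step 3 is unsupported.
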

	
	In this paper, we fix integers $m, \ell$ and the positive system $\Phi^+(B_\ell)$.
	The paper is organized as follows.
	In Section \ref{subsec:hyperplane},
	we introduce the Ziegler restriction and the related useful theorems.
	In Section \ref{subsec:Weyl-group},
	we introduce the Weyl group and its action. 
	We also show some properties of the simple-root basis from Abe and Terao.
	Then we give the proof of the main theorem \ref{thm:main} in Section \ref{sec:proof}.
	In the final section, we give an example for the case $B_2$ and $m=1$.
	
	\section{Preliminaries}
	
	\subsection{Hyperplane arrangement}\label{subsec:hyperplane}
	Let $\A$ be a nonempty arrangement and $H\in \A$.
	Define the restriction arrangement 
	by $\A^H=\{ H' \cap H \mid H'\in \A \backslash \{H\}, H' \cap H \neq \emptyset \}$.
	For a central arrangement $\A$ 
	and a map 
	$m:\A\longrightarrow\Z_{\geq 0}$,
	the pair $(\A, m)$ is called a multiarrangement. 
	The defining polynomial $Q(\A, m)$ of $(\A, m)$ is
	$
	Q(\A, m)=\prod_{H\in\A}\alpha_H^{m(H)}
	$. 	
	For a multiarrangement $(\A, m)$, 
	define the derivation module $\D(\A, m)$ by 
	\begin{equation}
		\D(\A, m)=\{\theta\in\Der(S)\mid \theta(\alpha_H)\in \alpha_H^{m(H)} S, 
		\mbox{ for any }H\in\A\}. 
	\end{equation}
	The multiarrangement $(\A, m)$ is free, if $\D(\A, m)$ is a free $S$-module.
	When $m \equiv 1$, $\D(\A, 1) =\D(\A)$.   
	
	\begin{proposition}
		\cite{OT,Z}
		\label{prop:szc}
		(Saito--Ziegler criterion) 
		Let $(\A, m)$ be a multiarrangement. 
		For homogeneous vector fields 
		$\theta_1, \dots, \theta_\ell\in \D(\A, m)$,
		the multiarrangement $(\A, m)$ is free with a basis 
		$\{\theta_1, \dots, \theta_\ell\}$
		if and only if $\theta_1, \dots, \theta_\ell$ are linearly independent over $S$,
		and $\sum_{i=1}^\ell\deg\theta_i=\sum_{H\in\A} m(H)$.
	\end{proposition}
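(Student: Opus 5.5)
The plan is the classical determinant argument. For homogeneous $\theta_1,\dots,\theta_\ell\in\D(\A,m)$, let $M(\theta_1,\dots,\theta_\ell)$ be the $\ell\times\ell$ matrix with entries $\theta_i(x_j)$. The key lemma I will prove first is that $Q(\A,m)$ divides $\det M(\theta_1,\dots,\theta_\ell)$ for every choice of $\theta_1,\dots,\theta_\ell\in\D(\A,m)$.

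\emph{Key lemma.} Fix $H\in\A$ and make a linear change of coordinates so that $\alpha_H=x_1$. Such a change multiplies the determinant by a nonzero constant. In the new coordinates, $\theta_i(x_1)\in x_1^{m(H)}S$ for each $i$, so the first column of the matrix is divisible by $x_1^{m(H)}$. Hence $\alpha_H^{m(H)}$ divides the determinant. Distinct hyperplanes give pairwise coprime linear forms, so their product $Q(\A,m)$ divides the determinant.

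\emph{The ``if'' direction.} Suppose the $\theta_i$ are $S$-independent and $\sum_i\deg\theta_i=\sum_{H\in\A}m(H)$. Then $\det M\neq0$. It is homogeneous of degree $\sum_i\deg\theta_i=\deg Q(\A,m)$, and by the lemma it is divisible by $Q(\A,m)$. Therefore $\det M=cQ(\A,m)$ with $c\in\C^\times$.

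To show the $\theta_i$ generate, take any $\theta\in\D(\A,m)$. Over the fraction field write $\theta=\sum_i f_i\theta_i$. By Cramer's rule, $f_i=\det M_i/\det M$, where $M_i$ is obtained from $M$ by replacing the row of $\theta_i$ with the row of $\theta$. The lemma applies to $M_i$ as well, since all its rows come from elements of $\D(\A,m)$. So $Q(\A,m)$ divides $\det M_i$, and each $f_i$ lies in $S$. Thus $\{\theta_i\}$ is a basis.

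\emph{The ``only if'' direction.} Suppose $\{\theta_1,\dots,\theta_\ell\}$ is a homogeneous basis. Independence is immediate from the definition of a basis. Since $Q(\A,m)\partial_{x_j}\in\D(\A,m)$ for every $j$, the module $\D(\A,m)$ has rank $\ell$ and $\det M\neq0$. By the lemma, $\det M=Q(\A,m)\,g$ for some nonzero $g\in S$, and it remains to show that $g$ is a constant. I will do this by localizing at a height-one prime $\mathfrak p=(p)$ with $p$ irreducible.

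If $p$ is not proportional to any $\alpha_H$, then $\D(\A,m)_{\mathfrak p}=\Der(S)_{\mathfrak p}$, because all the $\alpha_H$ become units. The $\theta_i$ still form a basis after localization. So $M$ becomes invertible over $S_{\mathfrak p}$, its determinant is a unit there, and $p$ does not divide $\det M$.

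If $p=\alpha_H$, choose coordinates with $\alpha_H=x_1$. The other forms $\alpha_{H'}$ are units in $S_{\mathfrak p}$. Hence $\D(\A,m)_{\mathfrak p}$ is the free module with basis $x_1^{m(H)}\partial_{x_1},\partial_{x_2},\dots,\partial_{x_\ell}$, whose matrix has determinant $x_1^{m(H)}$. The localized $\theta_i$ form another basis of the same free module, so the two determinants differ by a unit of $S_{\mathfrak p}$. Consequently the exact power of $\alpha_H$ dividing $\det M$ is $m(H)$.

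Combining the two cases, $\det M$ and $Q(\A,m)$ have the same irreducible factors with the same multiplicities, so $g\in\C^\times$. Taking degrees then gives $\sum_i\deg\theta_i=\deg\det M=\sum_{H\in\A}m(H)$.

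The main obstacle is this localization step: one must justify that localization commutes with forming $\D(\A,m)$, which holds because membership is defined by finitely many divisibility conditions, and that at $\mathfrak p=(\alpha_H)$ only the condition coming from $H$ survives. Everything else is routine linear algebra.
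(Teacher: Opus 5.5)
Your proof is correct. The paper gives no proof of this criterion; it quotes it from \cite{OT,Z}, so the relevant comparison is with the standard argument.

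Your key lemma (that $Q(\A,m)$ divides $\det M$) and your Cramer's-rule proof of the ``if'' direction are exactly the standard ones. Where you depart is the converse.

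You compute $v_{\mathfrak p}(\det M)$ prime by prime after identifying $\D(\A,m)_{\mathfrak p}$. This identification is sound. $\D(\A,m)$ is the kernel of $\Der(S)\to\bigoplus_{H\in\A}S/\alpha_H^{m(H)}S$, and localization is exact. At $\mathfrak p=(\alpha_H)$, every summand except the one for $H$ vanishes.

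The usual argument stays global instead. If $\theta_1,\dots,\theta_\ell$ is a basis, then $\det M(\theta_1,\dots,\theta_\ell)$ divides $\det M(\eta_1,\dots,\eta_\ell)$ for all $\eta_1,\dots,\eta_\ell\in\D(\A,m)$. Take coordinates with $\alpha_H=x_1$, set $Q_H=Q(\A,m)/x_1^{m(H)}$, and use the test fields $Q(\A,m)\partial_{x_1},Q_H\partial_{x_2},\dots,Q_H\partial_{x_\ell}$. Their determinant is $Q(\A,m)\,Q_H^{\ell-1}$, which gives $g\mid Q_H^{\ell-1}$. Hence every irreducible factor of $g$ is some $\alpha_{H'}$, but no $\alpha_H$ divides $g$, so $g$ is constant.

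The global version needs nothing beyond the divisibility lemma and explicit test derivations. Your local version is more conceptual: it shows that the determinant of any basis records exactly the codimension-one data $\{(\alpha_H,m(H))\}$ of the module. In that direction it does not even need the key lemma, since the valuations alone give $\det M\in\C^{\times}Q(\A,m)$.

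One small redundancy: $\det M\neq0$ already follows from $S$-independence of $\ell$ elements of $S^{\ell}$. So the remark that $Q(\A,m)\partial_{x_j}\in\D(\A,m)$ is unnecessary there.
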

	
	\begin{definition} 
		For a central arrangement $\A$ and a hyperplane $H\in \A$, let $(\A^H, m^H)$ be the Ziegler restriction 
		of $\A$ onto $H$ where $m^H$ is defined by
		$
		m^H(H'\cap H)= | \{ H''\in \A\backslash \{H\} \mid H''\cap H = H'\cap H \} |. 
		$
	\end{definition}
	
	The Ziegler restriction of $\cShi(B_\ell,m)$ onto $\{z=0\}$ is the Coxeter
	multiarrangement $(B_\ell,2m)$. 
	It has defining polynomial
	\begin{equation}\label{eq:B-multi}
		Q(B_\ell,2m)=
		\prod_{i=1}^{\ell}x_i^{2m}
		\prod_{1\leq i<j\leq\ell}(x_i^2-x_j^2)^{2m}.
	\end{equation}
	
	For a hyperplane $H$ and a vector field $\theta \in \D(\A)$ satisfy $\theta (\alpha_H)=0$,
	we have the restriction $\theta |_H \in \D(\A^H, m^H)$ by \cite{Z}.
	In particular,
	for any $\eta=\sum_i f_i(x,z)\partial_{x_i}\in\Der\widetilde S$ satisfying
	$\eta(z)=0$, define coefficientwise restriction by
	\begin{equation}
		\label{eq:res}
		\res(\eta)=\sum_{i=1}^{\ell}f_i(x,0)\partial_{x_i}.
	\end{equation}
	It can be applied to polynomials:
	$\res (f) = f(x, 0)$, for $f \in \widetilde{S}$.
	Yoshinaga \cite{Y04} showed the homogeneous part
	\begin{equation}
		\label{eq:res-d}
		\res_d:\D_0(\cShi(B_\ell, m))_d \longrightarrow \D(B_\ell, 2m)_d
	\end{equation}
	is a linear isomorphism.
	For $1\leq i\leq\ell$, define the vector field
	\begin{equation}\label{eq:theta-integral}
		\theta_i^{(m)}
		=x_i\sum_{j=1}^{\ell}
		\left(
		\int_0^{x_j}
		\frac{\prod_{k=1}^{\ell}(t^2-x_k^2)^m}{t^2-x_i^2}
		d t
		\right)\partial_{x_j}.
	\end{equation}
	The quotient in the integrand is polynomial because $m\geq1$.
	
	\begin{theorem}
		\cite{FWY}
		\label{thm:FWY}
		The derivations $\theta_1^{(m)},\dots,\theta_\ell^{(m)}$ defined above form a basis of
		$\D(B_\ell,2m)$. Each has degree $d=2m\ell$. 
	\end{theorem}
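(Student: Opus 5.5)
The statement in view is Theorem \ref{thm:FWY}, quoted from \cite{FWY}. Here is how I would prove it directly.

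The plan is to apply the Saito--Ziegler criterion (Proposition \ref{prop:szc}) to the multiarrangement $(B_\ell,2m)$ with the derivations $\theta_1^{(m)},\dots,\theta_\ell^{(m)}$. Each coefficient $x_i\int_0^{x_j}\prod_k(t^2-x_k^2)^m/(t^2-x_i^2)\,dt$ is homogeneous of degree $1+(2m\ell-2)+1=2m\ell=d$. The degree sum is therefore $\ell d=2m\ell^2$. On the other side, by \eqref{eq:B-multi},
\[
\sum_H m(H)=2m\ell+2m\cdot 2\binom{\ell}{2}=2m\ell^2,
\]
so the degree condition holds. It remains to check membership in $\D(B_\ell,2m)$ and linear independence.

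For membership, write $g_i(t)=\prod_k(t^2-x_k^2)^m/(t^2-x_i^2)$ and $G_i(s)=\int_0^s g_i(t)\,dt$. Then $\theta_i^{(m)}(x_j)=x_iG_i(x_j)$.

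\emph{Hyperplane $x_j$.} Since $g_i$ is even, $G_i$ is odd, and $G_i(x_j)=\int_0^{x_j}g_i$. For $j\ne i$, $g_i$ contains the factor $(t^2-x_j^2)^m$, so the integrand vanishes to order $m$ at $t=\pm x_j$ but only to order $0$ at $t=0$. I would therefore argue via the substitution $t=x_j u$ when $j\neq i$:
\[
G_i(x_j)=x_j\int_0^1 g_i(x_ju)\,du,
\]
which gives only one factor of $x_j$. That is not enough, so the naive approach must be refined. The correct check is on the derivative structure of the multiarrangement: $\theta(x_j)\in x_j^{2m}S$. Vanishing to order $2m$ at $x_j=0$ requires all even-order Taylor terms in $x_j$ of the odd function $G_i(x_j)$ up to order $2m$ to vanish. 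The first $2m$ derivatives in $x_j$ at $0$ are computed from $g_i(t)$ with $x_j$ set to $0$, which contains $t^{2m}$ when $j\ne i$. Hence $G_i(x_j)=O(x_j^{2m+1})$. For $j=i$, the factor $x_i$ in front contributes one order, and $g_i$ contains $t^{2m-2}$ at $x_i=0$, giving $x_i\cdot O(x_i^{2m-1})=O(x_i^{2m})$. This works.

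\emph{Hyperplanes $x_j\pm x_k$.} We need
\[
x_i\bigl(G_i(x_j)\mp G_i(x_k)\bigr)=\pm x_i\int_{\mp x_k}^{x_j}g_i\,dt
\]
to vanish to order $2m$ on $x_j=\mp x_k$, using oddness. On that hyperplane the integrand contains $(t^2-x_j^2)^m(t^2-x_k^2)^m$, which vanishes to order $2m$ at the coinciding endpoint (order $2m-1$ if $i\in\{j,k\}$, because of the division). Integrating over an interval of length $x_j\pm x_k$ then gives order $2m+1$, respectively $2m$. I would make this precise by substituting $t=x_k'+(x_j-x_k')u$, where $x_k'=\mp x_k$.

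For independence, the cleanest route is to show $\det\bigl(\theta_i^{(m)}(x_j)\bigr)\neq0$. Since the determinant has the correct degree, it is automatically a constant multiple of $Q(B_\ell,2m)$, so it suffices to exhibit one nonzero term. I expect this to be the main obstacle. My first attempt would be to take the lowest-order behaviour in a lexicographic degeneration $x_1\gg x_2\gg\cdots\gg x_\ell$, where the matrix becomes triangular up to leading terms. Alternatively, I would use the factorization $\theta_i=x_i\,\nabla\Psi_i$ with $\Psi_i=\sum_j G_i(x_j)$, and reduce nondegeneracy to the linear independence of the polynomials $g_i(t)$ in $t$, which holds because $g_i$ vanishes at $t=\pm x_k$ exactly when $k\ne i$.
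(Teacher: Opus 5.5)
The paper gives no proof of this statement; it quotes it from Feigin--Wang--Yoshinaga, so your argument has to stand on its own. Your degree count and the reduction to the Saito--Ziegler criterion (Proposition~\ref{prop:szc}) are correct.

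\textbf{The gap.} The one step with real content, $\det\bigl(\theta_i^{(m)}(x_j)\bigr)\neq0$, is never proved. You only name two possible routes, and the second one fails for two reasons.
\begin{itemize}
\item The factorization $\theta_i^{(m)}=x_i\nabla\Psi_i$ with $\Psi_i=\sum_kG_i(x_k)$ is false, already for degree reasons. The partial derivatives of $\Psi_i$ have degree $2m\ell-2$, while $G_i(x_j)$ has degree $2m\ell-1$. Moreover, $G_i$ depends on every $x_k$ through the integrand. The correct structural identity is Lemma~\ref{lem:eta0}: $\theta_i^{(m)}=-\frac{1}{2m}\bigtriangledown_{\partial_{x_i}}\eta_0^{(m)}$.
\item Linear independence of $g_1(t),\dots,g_\ell(t)$ over $\C(x)$ does not imply $\det(G_i(x_j))\neq0$. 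The evaluation points are the same variables on which the coefficients depend. For example, $f_1=t-x_1$ and $f_2=t(t-x_1)$ are independent, yet the matrix $(f_i(x_j))$ has a zero column.
\end{itemize}
Your stated reason for the independence of the $g_i$ is also off for $m\geq2$, since $g_i$ vanishes at $t=\pm x_i$ as well. Once membership is checked, independence is the whole substance of the theorem, and it remains open in your proposal.

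\textbf{How to close it.} Your first route can be made rigorous. View $\det\bigl(\theta_i^{(m)}(x_j)\bigr)$ as a polynomial in $x_1$ and record the $x_1$-degrees of the entries:
\begin{itemize}
\item The $(1,1)$ entry has $x_1$-degree $2m\ell$, with leading coefficient $\int_0^1(u^2-1)^{m-1}u^{2m(\ell-1)}\,du\neq0$.
\item The entries $(1,j)$, $j\geq2$, have $x_1$-degree $2m-1$.
\item The entries $(i,1)$, $i\geq2$, have $x_1$-degree at most $2m\ell-1$.
\item For $i,j\geq2$, the entry has $x_1$-degree $2m$, with leading coefficient $(-1)^m$ times the corresponding entry of the $B_{\ell-1}$ family in $x_2,\dots,x_\ell$.
\end{itemize}
Hence permutations moving $1$ contribute $x_1$-degree at most $4m\ell-2m-2$. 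The coefficient of $x_1^{4m\ell-2m}$ is then a nonzero constant times the $B_{\ell-1}$ determinant. Induction on $\ell$ gives $\det\neq0$; the base case $\ell=1$ is $\theta_1^{(m)}=\mathrm{const}\cdot x_1^{2m}\partial_{x_1}$ with nonzero constant.

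Alternatively, $\eta_0^{(m)}$ is a nonzero $W$-invariant element of $\D(B_\ell,2m+1)$ of degree $2m\ell+1$, hence a multiple of the $m$-Euler multiderivation. Lemma~\ref{lem:eta0} together with the Terao--Yoshinaga primitive-derivation theorem then yields the basis directly. That theorem says the $\bigtriangledown_{\partial_{x_i}}$-derivatives of that multiderivation form a basis of $\D(\A,2m)$.

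\textbf{Membership on coordinate hyperplanes.} Your check reaches the right conclusion by an invalid argument. The substitution $t=x_ju$ that you discarded actually gives $x_j^{2m+1}$, because $(t^2-x_j^2)^m=x_j^{2m}(u^2-1)^m$. Your replacement claim, that the low-order Taylor coefficients in $x_j$ only see $g_i|_{x_j=0}$, is false in general, since $x_j$ also occurs in the integrand. The coordinate case is simply the instance $x_k'=0$ of your correct endpoint substitution $t=x_k'+(x_j-x_k')u$ for the hyperplanes $x_j\pm x_k$.
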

	
	For vector field $\theta$ and $\delta$ with $\delta = \sum_{i=1}^\ell f_i \partial_{x_i}$, 
	define an affine connection $\bigtriangledown: \Der(S) \times \Der(S) \longrightarrow \Der(S)$ by
	\begin{equation}
		\label{eq:connection}
		\bigtriangledown_\theta \delta =\sum_{i=1}^\ell \theta(f_i) \partial_{x_i}.
	\end{equation}
	Introduce 
	\begin{equation}
		\label{eq:eta0}
		\eta_0^{(m)}
		=\sum_{i=1}^{\ell}
		\left(\int_0^{x_i}\prod_{k=1}^{\ell}(t^2-x_k^2)^m  dt\right)
		\partial_{x_i}.
	\end{equation}
	
	\begin{lemma}
		\label{lem:eta0}
		For $1\leq i\leq\ell$,
		$
		\bigtriangledown_{\partial_{x_i}}\eta_0^{(m)}=-2m\theta_i^{(m)}.
		$
	\end{lemma}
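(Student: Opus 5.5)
The plan is to differentiate each coefficient of $\eta_0^{(m)}$ directly and compare the result with the $j$-th coefficient of $\theta_i^{(m)}$. By \eqref{eq:connection}, the $j$-th coefficient of $\bigtriangledown_{\partial_{x_i}}\eta_0^{(m)}$ is
\[
\partial_{x_i}\int_0^{x_j}\prod_{k=1}^{\ell}(t^2-x_k^2)^m\,dt .
\]
Write $G(t,x)=\prod_{k=1}^{\ell}(t^2-x_k^2)^m$. The upper limit $x_j$ may itself depend on $x_i$, so the two cases $j\neq i$ and $j=i$ will be handled separately.

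\emph{Case $j\neq i$.} The upper limit does not involve $x_i$, so we differentiate under the integral sign. Since
\[
\partial_{x_i}G=-2mx_i\,\frac{G(t,x)}{t^2-x_i^2},
\]
we get $-2m x_i\int_0^{x_j}\frac{G}{t^2-x_i^2}\,dt$. This is exactly $-2m$ times the $j$-th coefficient of $\theta_i^{(m)}$ in \eqref{eq:theta-integral}.

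\emph{Case $j=i$.} The Leibniz rule gives a boundary term $G(x_i,x)$ in addition to the same interior term $-2mx_i\int_0^{x_i}\frac{G}{t^2-x_i^2}\,dt$. The boundary term vanishes because $G(x_i,x)$ contains the factor $(x_i^2-x_i^2)^m=0$ and $m\geq1$. So again the coefficient is $-2m$ times the $i$-th coefficient of $\theta_i^{(m)}$.

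Everything here is a polynomial identity: the integrals are of polynomials in $t$ with coefficients in $S$, and the quotient $G/(t^2-x_i^2)$ is a polynomial because $m\geq1$. Differentiating under the integral sign and the Leibniz rule are therefore justified formally. One can see this by writing $\int_0^{x_j}t^n\,dt=x_j^{n+1}/(n+1)$ and applying the product rule.

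There is no real obstacle. The only point needing care is the boundary term in the case $j=i$, and it is killed precisely by the hypothesis $m\geq1$.
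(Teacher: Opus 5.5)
Your proposal is correct and follows the paper's proof: differentiate each coefficient $\int_0^{x_j}\prod_k(t^2-x_k^2)^m\,dt$ with respect to $x_i$ and compare with the coefficients of $\theta_i^{(m)}$. You also handle the boundary term $G(x_i,x)$ in the case $j=i$, which vanishes because $m\geq1$; the paper's one-line proof leaves this step implicit.
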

	\begin{proof}
		For each $\int_0^{x_j}\prod_k(t^2-x_k^2)^m d t$, we have
		\[
		\partial_{x_i} \int_0^{x_j}\prod_k(t^2-x_k^2)^m d t =-2mx_i\int_0^{x_j}
		\frac{\prod_k(t^2-x_k^2)^m}{t^2-x_i^2} d t.
		\]
		Summing over $j$ proves the assertion.
	\end{proof}
	
	\begin{theorem}
		\cite{Y14,Z}
		\label{thm:restriction}
		For an arrangement $\A$ and a hyperplane $H$ in $\C^{\ell+1}$, 
		assume that homogeneous vector fields $\delta_1, \delta_2, \dots, \delta_{\ell} \in \D(\A)$ satisfy $\delta_i (\alpha_H)=0$.
		If the restrictions $\delta_1 |_H, \delta_2 |_H, \dots, \delta_{\ell} |_H$ form a basis for the 
		Ziegler restriction $(\A^H, m^H)$, 
		$\delta_1, \delta_2, \dots, \delta_{\ell}$ 
		together with the Euler derivation $\theta_E$ form a basis for $\A$ 
		with exponents $(1, \deg \delta_1, \dots, \deg \delta_{\ell})$.
	\end{theorem}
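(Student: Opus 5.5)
The plan is to apply the Saito--Ziegler criterion (Proposition \ref{prop:szc}) with multiplicity $m\equiv 1$ to the $\ell+1$ vector fields $\theta_E,\delta_1,\dots,\delta_\ell$ on $\C^{\ell+1}$. This requires three things: the fields lie in $\D(\A)$, they are linearly independent over the polynomial ring, and their degrees sum to $|\A|$. The first is immediate, since the $\delta_i$ are assumed to lie in $\D(\A)$ and $\theta_E(\alpha_{H'})=\alpha_{H'}$ for every $H'\in\A$. Choose linear coordinates $(x_1,\dots,x_\ell,z)$ on $\C^{\ell+1}$ with $\alpha_H=z$. Then the hypothesis $\delta_i(\alpha_H)=0$ means each $\delta_i$ has no $\partial_z$-component. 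Moreover, $\delta_i|_H$ is obtained by setting $z=0$ in the coefficients, exactly as in \eqref{eq:res}.

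For the degree count, note that since $\A$ is central, every $H'\in\A\setminus\{H\}$ meets $H$. Hence each such $H'$ is counted in exactly one multiplicity $m^H(X)$, namely the one for $X=H'\cap H$, and so
\[
\sum_{X\in\A^H}m^H(X)=|\A|-1 .
\]
Since $\delta_1|_H,\dots,\delta_\ell|_H$ form a basis of $\D(\A^H,m^H)$, the Saito--Ziegler criterion (the ``only if'' direction) gives $\sum_i\deg(\delta_i|_H)=|\A|-1$. Each $\delta_i|_H$ is a basis element, hence nonzero. Because $\delta_i$ is homogeneous, some coefficient of $\delta_i$ has a nonzero part after setting $z=0$, and therefore $\deg\delta_i=\deg(\delta_i|_H)$. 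Adding $\deg\theta_E=1$, the total is $|\A|$.

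For linear independence, I would compute the determinant $M$ of the $(\ell+1)\times(\ell+1)$ coefficient matrix in the basis $\partial_{x_1},\dots,\partial_{x_\ell},\partial_z$. The $\partial_z$-column has entry $z$ in the row of $\theta_E$ and $0$ in every row $\delta_i$. Expanding along that column gives
\[
M=\pm z\cdot\det\bigl(\delta_i(x_j)\bigr)_{1\le i,j\le\ell}.
\]
Reducing modulo $z$, $\det(\delta_i(x_j))$ becomes the determinant of the coefficient matrix of $\delta_1|_H,\dots,\delta_\ell|_H$. That determinant is nonzero because these restrictions form a basis of $\D(\A^H,m^H)$; in fact it is a nonzero multiple of $Q(\A^H,m^H)$, but nonvanishing is all that is needed. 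Hence $\det(\delta_i(x_j))\neq0$, so $M\neq0$, and the $\ell+1$ fields are independent. The Saito--Ziegler criterion then shows that $\A$ is free with basis $\theta_E,\delta_1,\dots,\delta_\ell$ and exponents $(1,\deg\delta_1,\dots,\deg\delta_\ell)$.

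There is no serious obstacle. The only points needing care are the degree equality $\deg\delta_i=\deg(\delta_i|_H)$, which uses homogeneity together with the restriction being nonzero, and the identity $\sum_X m^H(X)=|\A|-1$, which uses centrality of $\A$.
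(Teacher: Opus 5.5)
Your proposal is correct: the paper gives no proof of this statement and only cites Yoshinaga and Ziegler, and your argument is the standard Saito-criterion proof. You apply the criterion to $\theta_E,\delta_1,\dots,\delta_\ell$. The determinant equals $\pm z\det(\delta_i(x_j))$, and $\det(\delta_i(x_j))$ is nonzero modulo $z$ because the restrictions form a basis. The degree count $1+\sum_X m^H(X)=|\A|$ uses centrality of $\A$ and $\deg\delta_i=\deg(\delta_i|_H)$.
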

	
	\subsection{The Weyl group and its action}\label{subsec:Weyl-group}
	
	The Weyl group of type $B_\ell$ is the signed permutation group
	$
	W=W(B_\ell) \cong \{\pm1\}^\ell\rtimes\mathfrak{S}_\ell.
	$
	Thus $w\in W$ is uniquely described by a permutation
	$\sigma\in\mathfrak S_\ell$ and signs
	$\varepsilon_1,\dots,\varepsilon_\ell\in\{\pm1\}$ such that
	$
	w(e_i)=\varepsilon_i e_{\sigma(i)}.
	$
	See \cite{H} for details.
	\begin{definition}
		[Definition 6.2, \cite{OT}]
		\label{def:action-on-module}
		Let $w\in W$, $f\in S$, $v\in V$, $\theta \in\Der(S)$. 
		Define the $W$-module structure
		\begin{itemize}
			\item 
			in $S$ by $(wf)(v)=f(w^{-1}v),$  
			\item 
			in $\Der(S)$ by $(w\theta)(f)=w\bigl(\theta(w^{-1}f)\bigr)$.  
		\end{itemize} 
	\end{definition}
	We extend the action to $\widetilde{S}$ by declaring that $w(z)=z$.
	In particular,
	$
	w(\partial_{x_i})=\varepsilon_i\partial_{x_{\sigma(i)}},
	w(\partial_z)=\partial_z,
	$
	and hence
	\begin{equation}
		\label{eq:action-on-field}
		w\left(\sum_{i=1}^{\ell}f_i\partial_{x_i}+g\partial_z\right)
		=\sum_{i=1}^{\ell}
		\varepsilon_i w(f_i)\partial_{x_{\sigma(i)}}+w(g)\partial_z.
	\end{equation}
	For $v\in V$,
	we have
	$w(\partial_v)=\partial_{wv}$.
	Choose the simple roots
	$\alpha_r=x_r-x_{r+1}\ (1\leq r<\ell),\alpha_\ell=x_\ell.$
	For $1\leq r<\ell$, 
	the corresponding simple reflection $s_r$ swaps
	$e_r$ with $e_{r+1}$, swaps $x_r$ with $x_{r+1}$, 
	and fixes all other $e_i,x_i$ and $z$.  
	The final simple reflection $s_\ell$ changes the signs of
	$e_\ell,x_\ell$, 
	and $\partial_{x_\ell}$ 
	and fixes the remaining coordinates
	and $z$.
	
	Equip $V^*$ with the standard $W$-invariant inner product for which
	$x_1,\dots,x_\ell$ are orthonormal.  The action of a simple reflection
	on $V^*$ is
	\begin{equation}
		\label{eq:simple-reflection}
		s_r(\beta)
		=\beta-
		\frac{2(\alpha_r,\beta)}{(\alpha_r,\alpha_r)}\alpha_r
		\qquad(\beta\in V^*).
	\end{equation}
	
	If $\sigma_r=(r\ r+1)$, then
	\begin{equation}
		\label{eq:s-r-action}
		s_r\left(\sum_i f_i\partial_{x_i}+g\partial_z\right)
		=\sum_i s_r(f_i)\partial_{x_{\sigma_r(i)}}+s_r(g)\partial_z.
	\end{equation}
	
	\begin{definition}
		\label{eq:Xi}
		Define the map
		$\Xi_m:V\longrightarrow \D(B_\ell,2m)_d$
		by
		$\Xi_m(e_i)=\theta_i^{(m)}.$
	\end{definition}
	
	\begin{lemma}
		\label{lem:Xi-equiv}
		The map $\Xi_m$ is a $W$-isomorphism.
	\end{lemma}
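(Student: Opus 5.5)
We must show $\Xi_m$ is a linear isomorphism onto $\D(B_\ell,2m)_d$ that intertwines the $W$-actions, i.e. $\Xi_m(wv)=w\,\Xi_m(v)$ for all $w\in W$ and $v\in V$.

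\emph{Bijectivity.} By Theorem \ref{thm:FWY}, $\theta_1^{(m)},\dots,\theta_\ell^{(m)}$ form an $S$-basis of $\D(B_\ell,2m)$, all of degree $d$. Hence $\D(B_\ell,2m)_d$ is spanned over $S_0=\C$ by these $\ell$ elements. They are $S$-independent, hence $\C$-independent. So $\dim\D(B_\ell,2m)_d=\ell$, and the linear extension of $e_i\mapsto\theta_i^{(m)}$ is a linear isomorphism.

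\emph{Equivariance.} It suffices to check equivariance on the generators $s_1,\dots,s_\ell$ and the basis $e_i$. Rather than computing directly with \eqref{eq:theta-integral}, we use Lemma \ref{lem:eta0}. Since $\int_0^{x_i}\prod_k(t^2-x_k^2)^m\,dt$ is odd in $x_i$, symmetric in the other variables, and the integrand is $W$-invariant in the $x_k$, the field $\eta_0^{(m)}$ should be $W$-invariant. We verify this on generators.

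\begin{itemize}
\item For $s_r$ with $r<\ell$: by \eqref{eq:s-r-action}, the coefficient of $\partial_{x_{\sigma_r(i)}}$ in $s_r\eta_0^{(m)}$ is $s_r$ applied to the $i$-th coefficient, which is $\int_0^{x_{\sigma_r(i)}}\prod_k(t^2-x_k^2)^m\,dt$. So $s_r\eta_0^{(m)}=\eta_0^{(m)}$.
\item For $s_\ell$: the $\ell$-th coefficient changes sign under $x_\ell\mapsto -x_\ell$ by oddness, and $\partial_{x_\ell}\mapsto-\partial_{x_\ell}$, so that term is fixed. The other coefficients are even in $x_\ell$, so they are fixed too.
\end{itemize}

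Next, the connection is $W$-compatible: $w(\bigtriangledown_\theta\delta)=\bigtriangledown_{w\theta}(w\delta)$. This is a direct check from \eqref{eq:action-on-field}, using that $w$ acts linearly on the constant frame $\partial_{x_i}$ and that $w(\theta(f))=(w\theta)(wf)$ by Definition \ref{def:action-on-module}. Applying it with $\theta=\partial_{x_i}=\partial_{e_i}$ and $\delta=\eta_0^{(m)}$ gives
\[
w\bigl(-2m\theta_i^{(m)}\bigr)=\bigtriangledown_{\partial_{we_i}}\eta_0^{(m)}.
\]
Here we used $w(\partial_v)=\partial_{wv}$ and the invariance of $\eta_0^{(m)}$.

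Finally, $v\mapsto\bigtriangledown_{\partial_v}\eta_0^{(m)}$ is $\C$-linear in $v$. With $we_i=\varepsilon_ie_{\sigma(i)}$, the right-hand side equals $-2m\varepsilon_i\theta_{\sigma(i)}^{(m)}=-2m\,\Xi_m(we_i)$. Dividing by $-2m\neq0$ gives $w\,\Xi_m(e_i)=\Xi_m(we_i)$, which proves equivariance.

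The main obstacle is the compatibility identity $w(\bigtriangledown_\theta\delta)=\bigtriangledown_{w\theta}(w\delta)$, in particular getting the signs $\varepsilon_i$ right. If this becomes messy, the fallback is to verify $s_r\theta_i^{(m)}=\theta_{\sigma_r(i)}^{(m)}$ and $s_\ell\theta_i^{(m)}=\pm\theta_i^{(m)}$ directly from \eqref{eq:theta-integral}. That check uses the same parity argument: $x_\ell\int_0^{x_\ell}(\cdots)\,dt$ is even in $x_\ell$, so $s_\ell\theta_\ell^{(m)}=-\theta_\ell^{(m)}$ after the $\partial_{x_\ell}$ sign flip, while $\theta_i^{(m)}$ for $i\neq\ell$ is fixed.
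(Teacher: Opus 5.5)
Your proof is correct, but the equivariance part takes a different route from the paper.

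\textbf{The paper's route.} The paper computes $w\theta_i^{(m)}$ directly from \eqref{eq:theta-integral} for an arbitrary signed permutation $w$. It uses the $W$-invariance of $\prod_k(t^2-x_k^2)^m$ to rewrite the integrand, with $w(x_i)=\varepsilon_i x_{\sigma(i)}$ for the prefactor. It then pulls $\varepsilon_j$ out of the upper limit $\varepsilon_j x_{\sigma(j)}$ by evenness of the integrand in $t$, and cancels it against the $\varepsilon_j$ coming from $w(\partial_{x_j})$.

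\textbf{Your route.} You write $\Xi_m(v)=-\frac{1}{2m}\bigtriangledown_{\partial_v}\eta_0^{(m)}$, using Lemma \ref{lem:eta0} and linearity in $v$. You check $W$-invariance of the single field $\eta_0^{(m)}$ on simple reflections only. Then the naturality identity $w(\bigtriangledown_\theta\delta)=\bigtriangledown_{w\theta}(w\delta)$, together with $w(\partial_v)=\partial_{wv}$, produces the signs $\varepsilon_i$ automatically. The naturality identity does hold: for $\delta=\sum_i f_i\partial_{x_i}$, both sides equal $\sum_i\varepsilon_i\,w(\theta(f_i))\,\partial_{x_{\sigma(i)}}$.

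\textbf{Comparison.} Both arguments rest on the same parity fact, evenness in $t$. In yours it is needed only once, for $s_\ell$ acting on $\eta_0^{(m)}$. The sign bookkeeping the paper does by hand is absorbed into the linearity of $v\mapsto\bigtriangledown_{\partial_v}\eta_0^{(m)}$. Your version is more conceptual: equivariance of $\Xi_m$ becomes the statement that differentiating a $W$-invariant field in direction $v$ is equivariant in $v$. It also fits with how $\eta_0^{(m)}$ and $\bigtriangledown_{\partial_{x_1}}$ reappear in Theorem \ref{thm:res-Phi}. The paper's version is shorter and self-contained, needing only the defining formula.

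Your bijectivity argument matches the paper's, with the dimension count made explicit. One minor point: your remark that it suffices to check generators is not used in the final step. You only need it for the invariance of $\eta_0^{(m)}$, since your naturality identity already handles every $w$.
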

	
	\begin{proof}
		Let $w\in W$.
		Since $\prod_{k=1}^{\ell}(t^2-x_k^2)^m$ is $W$-invariant
		under signed permutations of the $x$-variables,
		we have
		\[
		w(\int_0^{x_j}\frac{\prod_{k=1}^{\ell}(t^2-x_k^2)^m}{t^2-x_i^2}dt)=
		\int_0^{\varepsilon_j x_{\sigma(j)}} \frac{\prod_{k=1}^{\ell}(t^2-x_k^2)^m}{t^2-x_{\sigma(i)}^2}dt.
		\]
		Since $\frac{\prod_{k=1}^{\ell}(t^2-x_k^2)^m}{t^2-x_i^2}$ is even in $t$,
		we have
		\begin{align*}
			w(\theta_i^{(m)})
			&=\varepsilon_i x_{\sigma(i)}
			\sum_{j=1}^{\ell}
			\left(
			\int_0^{\varepsilon_j x_{\sigma(j)}} 
			\frac{\prod_{k=1}^{\ell}(t^2-x_k^2)^m}{t^2-x_{\sigma(i)}^2}
			dt
			\ \ 
			\varepsilon_j \partial_{x_{\sigma(j)}}
			\right) \\
			&=\varepsilon_i x_{\sigma(i)}
			\sum_{j=1}^{\ell}
			\left(
			\varepsilon_j^2
			\int_0^{x_{\sigma(j)}} 
			\frac{\prod_{k=1}^{\ell}(t^2-x_k^2)^m}{t^2-x_{\sigma(i)}^2}
			dt
			\ 
			\partial_{x_{\sigma(j)}}
			\right) \\
			&=\varepsilon_i\theta_{\sigma(i)}^{(m)}
			=\Xi_m(\varepsilon_i e_{\sigma(i)})
			=\Xi_m(we_i).
		\end{align*}
		Thus $\Xi_m$ is $W$-equivariant.  
		By Theorem \ref{thm:FWY}, 
		$\theta_1^{(m)},\dots,\theta_\ell^{(m)}$ form a basis of 
		$\D(B_\ell,2m)_d$, 
		so $\Xi_m$ is an isomorphism.
	\end{proof}
	
	Let $\alpha_1^*,\dots,\alpha_\ell^*\in V$ be dual to the simple roots
	under the natural pairing, so that
	$\alpha_j(\alpha_r^*)=\delta_{jr}$ (Kronecker’s delta).
	We have
	$
	\alpha_r^*=e_1+\cdots+e_r
	\ (1\leq r\leq\ell).
	$
	Define the linear isomorphism
	\begin{equation}
		\label{eq:Linear-iso}
		\mathcal{L}_m
		=\res_d^{-1}\circ\Xi_m:
		V\longrightarrow \D_0(\cShi(B_\ell,m))_d
	\end{equation}
	and set
	$
	\varphi_0^+=0,
	\varphi_r^+=\mathcal{L}_m(\alpha_r^*)
	\ (1\leq r\leq\ell).
	$
	The derivations $\varphi_r^+$ are called a simple-root basis plus \cite{AT}.
	Theorem 3.5 in \cite{AT} gives
	\begin{equation}
		\label{eq:sr-varphi}
		s_r(\varphi_j^+)=\varphi_j^+
		\qquad(j\ne r).
	\end{equation}
	Proposition~4.2 and Theorem~4.3 in \cite{AT} show that the
	$m$-Euler derivation
	\begin{equation}
		\label{eq:m-Euler}
		\eta^{(m)}
		=\sum_{j=1}^{\ell}(\alpha_j+mz)\varphi_j^+
	\end{equation}
	is $W$-invariant.  
	We now derive the identity used in the recursion.
	Set
	$
	c_{rj}
	=\frac{2(\alpha_r,\alpha_j)}{(\alpha_r,\alpha_r)}.
	$
	By \eqref{eq:simple-reflection},
	$s_r(\alpha_j)=\alpha_j-c_{rj}\alpha_r$.  
	Apply $s_r$ to
	\eqref{eq:m-Euler}.  
	Since $s_r(\eta^{(m)})=\eta^{(m)}$,
	$s_r(\alpha_r)=-\alpha_r$, $s_r(z)=z$, 
	and
	\eqref{eq:sr-varphi} holds,
	we have
	\begin{align*}
		\eta^{(m)}
		&=(mz-\alpha_r)s_r(\varphi_r^+)
		+\sum_{j\ne r}
		\bigl(\alpha_j-c_{rj}\alpha_r+mz\bigr)\varphi_j^+.
	\end{align*}
	Comparing this with the original expression
	$
	\eta^{(m)}
	=(mz+\alpha_r)\varphi_r^+
	+\sum_{j\ne r}(\alpha_j+mz)\varphi_j^+
	$
	and cancelling the common terms gives
	\begin{equation}
		\label{eq:sr-m-Euler}
		(mz-\alpha_r)s_r(\varphi_r^+)
		=(mz+\alpha_r)\varphi_r^+
		+\alpha_r\sum_{j\ne r}
		\frac{2(\alpha_r,\alpha_j)}{(\alpha_r,\alpha_r)}
		\varphi_j^+.
	\end{equation}
	
	\section{Proof of the main theorem}
	\label{sec:proof}
	Let $\cCat(B_\ell,m)$ be the central arrangement with defining polynomial
	\begin{equation}
		\label{eq:Catalan}
		Q_{\cCat}(x,z)=z\prod_{\alpha\in\Phi^+(B_\ell)}C_m(\alpha(x),z),
	\end{equation}
	and let $\D_0(\cCat)$ denote the derivations that annihilate $z$.
	
	\begin{theorem}
		\cite{K}
		\label{thm:kawanoue}
		The vector field $\widetilde\kappa_{\ell,m}$ in
		\eqref{eq:k-basis} is homogeneous of degree $d+1$ and belongs
		to $\D_0(\cCat(B_\ell,m))$.
	\end{theorem}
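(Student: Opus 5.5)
The statement has two parts. Homogeneity is a degree count. Membership in $\D_0(\cCat(B_\ell,m))$ reduces, after using symmetries of $\widetilde h_{\ell-1,m,0}$, to a single vanishing identity; I would prove that identity through an integral representation that removes the nested sums.

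\emph{Degree and annihilation of $z$.} The second is immediate, since $\widetilde\kappa_{\ell,m}$ has no $\partial_z$ component. For the degree, fix a multi-index $(c_1,\dots,c_{\ell-1})$. The factor $P_z(x,m+C_{\ell-1})$ has degree $2(m+C_{\ell-1})+1$. The $j$-th quotient $P_z(y_j,m+C_{j-1})/P_z(y_j,C_j)$ has degree $2(m+C_{j-1})-2C_j=2(m-c_j)$. Summing gives
\[
2m+1+2C_{\ell-1}+\sum_{j=1}^{\ell-1}2(m-c_j)=2m\ell+1=d+1.
\]

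\emph{Reductions for membership.} The hyperplanes $x_i-kz=0$ with $|k|\le m$ are harmless. Indeed, $\widetilde\kappa_{\ell,m}(x_i)=\widetilde F_{\ell,m,0,i}$, and every summand contains $P_z(x_i,m+C_{\ell-1})$, which is divisible by $\prod_{|k|\le m}(x_i-kz)$. For the other roots I record two parity facts, both holding because $P_z(a,r)$ is odd in $a$:
\begin{itemize}
\item $\widetilde h_{\ell-1,m,0}$ is odd in $x$;
\item each quotient $P_z(y_j,\cdot)/P_z(y_j,\cdot)$ is even in $y_j$.
\end{itemize}
Consequently, substituting $x_j\mapsto -x_j$ sends $\widetilde F_{\ell,m,0,i}+\widetilde F_{\ell,m,0,j}$ to $\pm\bigl(\widetilde F_{\ell,m,0,i}-\widetilde F_{\ell,m,0,j}\bigr)$, possibly after a relabeling. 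So the conditions for $x_i+x_j-kz$ follow from those for $x_i-x_j-kz$. Next I would prove that $\widetilde h_{\ell-1,m,0}$ is symmetric in $y_1,\dots,y_{\ell-1}$. This is not visible from the formula, since the summation is ordered. With this symmetry, every pair $(i,j)$ reduces to $(i,j)=(1,2)$, and the remaining claim is
\[
\widetilde h(x;y,y_2,\dots)-\widetilde h(y;x,y_2,\dots)\equiv 0 \quad\text{on } x-y=kz,\ |k|\le m.
\]

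\emph{Integral representation.} To prove both the symmetry and the vanishing, I would replace the weight $1/\binom{m+C+\frac12}{m+1}$ by a Beta-type integral in an auxiliary variable. Such weights arise from $\int_0^1 s^{a}(1-s)^{b}\,ds$ with half-integer exponents, or equivalently from $\int_0^{x}$ of even polynomials as in $\theta^{(m)}_i$. The goal is to collapse the sums over $c_j$ with $\binom{m}{c_j}(-1)^{c_j}$ into products of binomial-type differences. Concretely, $\sum_{c}\binom{m}{c}(-1)^c(\cdots)$ should be an $m$-th finite difference in a shift parameter. This should express $\widetilde h$ as a discrete-plus-continuous integral of a product $\prod_j(\text{something in }y_j)$, which is manifestly symmetric. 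If a direct formula is elusive, I would fall back on induction on $\ell$. Peeling off $c_1$ gives
\[
\widetilde h_{\ell-1,m,0}=\sum_{c_1}\binom{m}{c_1}\frac{P_z(y_1,m)}{P_z(y_1,c_1)}\,(\text{a shifted }\widetilde h_{\ell-2}),
\]
with $m+c_1$ playing the role of $m$ in the leading factor. One would then prove a generalized identity for the family $\widetilde h_{\ell-1,m,a}$ with an extra shift parameter $a$, which is where the third index $0$ comes from.

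\emph{Main obstacle.} I expect the hardest step to be the vanishing of $\widetilde h(x;y,\dots)-\widetilde h(y;x,\dots)$ on $x-y=kz$. On that locus, the $y$-dependent factors of the terms indexed by $c$ must cancel against those of the swapped terms indexed by some $c'$. The coefficients $\binom{m}{c}$ and the half-integer binomials then have to match through a Chu--Vandermonde or Saalschütz-type summation. I would first verify this by hand for $\ell=2$, which is the $B_2$ conjecture of \cite{FWY}, in order to identify the correct hypergeometric identity. I would then lift it to general $\ell$ via the inductive, shift-parameter version described above.
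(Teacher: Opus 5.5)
There is a genuine gap: your proposal handles the routine parts correctly but does not prove the two statements that carry the whole content of Kawanoue's theorem. The correct parts are these:
\begin{itemize}
\item the degree count;
\item the observation $\widetilde\kappa_{\ell,m}(z)=0$;
\item the divisibility of $\widetilde F_{\ell,m,0,i}$ by $P_z(x_i,m)=\prod_{|k|\le m}(x_i-kz)$;
\item the parity argument transferring the conditions for $x_i+x_j-kz$ to those for $x_i-x_j-kz$ (via $x_j\mapsto -x_j$, using that $\widetilde h_{\ell-1,m,0}$ is odd in $x$ and even in each $y_j$);
\item the reduction to the pair $(1,2)$, assuming symmetry.
\end{itemize}
What remains unproved is that $\widetilde h_{\ell-1,m,0}$ is symmetric in $y_1,\dots,y_{\ell-1}$, and that $\widetilde h(x;y,y_2,\dots)-\widetilde h(y;x,y_2,\dots)$ vanishes on $x-y=kz$ for $|k|\le m$. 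You list candidate tools: a Beta-integral representation, $m$-th finite differences, Chu--Vandermonde or Saalsch\"utz summation, and induction through a shifted family $\widetilde h_{\ell-1,m,a}$. But you never write down the identity you would prove, the precise statement for the shifted family, or the cancellation pattern on $x-y=kz$. The paper itself gives no argument to compare against: it imports the result from \cite{K}. So an independent proof must supply this step, and yours does not.

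To see why the missing step is not routine, write the $j$-th factor as $P_z(y_j,m+C_{j-1})/P_z(y_j,C_j)=\prod_{k=C_j+1}^{m+C_{j-1}}(y_j^2-k^2z^2)$. At $z=0$ this is $y_j^{2(m-c_j)}$, independent of the partial sums. The sum then factors by the binomial theorem into $c_{\ell,m}\int_0^{x}(t^2-x^2)^m\prod_j(t^2-y_j^2)^m\,dt$; this is exactly the computation in the paper's lemma on $\res(\widetilde\kappa_{\ell,m})$. From that form, symmetry and divisibility by $(x_i-x_j)^{2m+1}$ are immediate. For $z\neq0$, the window $[C_j+1,\,m+C_{j-1}]$ depends on all of $c_1,\dots,c_j$. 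This couples the $y_j$ to one another and to the factor $P_z(x,m+C_{\ell-1})$. Symmetry then holds only after cancellation among terms with the same $C_{\ell-1}$. Already for $\ell=3$, $m=1$, the two $C_2=1$ terms contribute $(y_2^2-4z^2)$ and $(y_1^2-z^2)$ respectively, and neither is symmetric alone. Writing $1/\binom{m+C+\frac12}{m+1}$ as a Beta integral handles only the weight, not this coupling. What you need is an explicit $z$-deformed identity. One option is a representation of $\widetilde h_{\ell-1,m,0}$ (for instance a discrete integral) that is manifestly symmetric and whose antisymmetrization under $x\leftrightarrow y$ visibly contains $\prod_{|k|\le m}(x-y-kz)$. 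Another is a precisely stated and proved recursion in the shift parameter. Until one of these is supplied, the proposal is a plan for the hard step rather than a proof of it.
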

	
	\begin{lemma}
		\label{lem:res-k}
		For the vector field $\widetilde{\kappa}_{\ell,m}$ defined in \eqref{eq:k-basis},
		we have
		$\res(\widetilde{\kappa}_{\ell,m})
		=c_{\ell,m}\eta_0^{(m)},$
		where
		$c_{\ell,m}=(-1)^{\ell m}2(m+1).$
		
	\end{lemma}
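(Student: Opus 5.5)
The plan is a direct coefficientwise computation. Both sides are derivations with no $\partial_z$ component, and the $i$-th coefficient of $\res(\widetilde\kappa_{\ell,m})$ is $\widetilde F_{\ell,m,0,i}(x,0)=\widetilde h_{\ell-1,m,0}(x_i;\dots,\widehat{x_i},\dots;0)$. So it suffices to prove the polynomial identity
\[
\widetilde h_{\ell-1,m,0}(x;y_1,\dots,y_{\ell-1};0)=c_{\ell,m}\int_0^{x}(t^2-x^2)^m\prod_{j=1}^{\ell-1}(t^2-y_j^2)^m\,dt
\]
and then substitute $x=x_i$ and $(y_1,\dots,y_{\ell-1})=(x_1,\dots,\widehat{x_i},\dots,x_\ell)$. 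Since the product $\prod_k(t^2-x_k^2)^m$ is symmetric, this substitution reproduces the $i$-th coefficient of $\eta_0^{(m)}$ in \eqref{eq:eta0}.

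First I specialize the left side at $z=0$. From \eqref{P-poly}, $P_0(a,r)=a^{2r+1}$. Hence the summand indexed by $(c_1,\dots,c_{\ell-1})$ becomes
\[
\frac{(-1)^{C_{\ell-1}}x^{2m+2C_{\ell-1}+1}}{\binom{m+C_{\ell-1}+\frac12}{m+1}}\prod_{j=1}^{\ell-1}\binom{m}{c_j}y_j^{2(m-c_j)},
\]
because the exponent of $y_j$ is $(2m+2C_{j-1}+1)-(2C_j+1)=2(m-c_j)$.

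Next I expand the right side. By the binomial theorem,
\[
\prod_{j}(t^2-y_j^2)^m=\sum_{c_1,\dots,c_{\ell-1}=0}^{m}\prod_j\binom{m}{c_j}(-1)^{m-c_j}t^{2c_j}y_j^{2(m-c_j)}.
\]
The term indexed by $(c_j)$ therefore carries $t^{2C}$ with $C=C_{\ell-1}$ and the sign $(-1)^{(\ell-1)m-C}$. It remains to evaluate $\int_0^x t^{2C}(t^2-x^2)^m\,dt$. Substituting $t=xu$ and then $u^2=v$ gives
\[
x^{2C+2m+1}(-1)^m\tfrac12 B\!\left(C+\tfrac12,m+1\right).
\]

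The only nontrivial step is to match this Beta value with Kawanoue's normalization $\binom{m+C+\frac12}{m+1}^{-1}$. Writing the generalized binomial coefficient with Gamma functions gives
\[
\binom{m+C+\tfrac12}{m+1}^{-1}=\frac{\Gamma(m+2)\Gamma(C+\frac12)}{\Gamma(m+C+\frac32)}=(m+1)B\!\left(C+\tfrac12,m+1\right).
\]
Hence the integral equals
\[
\frac{(-1)^m x^{2m+2C+1}}{2(m+1)\binom{m+C+\frac12}{m+1}}.
\]

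Finally I compare signs. On the right, the term indexed by $(c_j)$ has sign $(-1)^{(\ell-1)m-C}\cdot(-1)^m=(-1)^{\ell m}(-1)^{C}$, and $(-1)^{-C}=(-1)^{C}$. Every other factor agrees with the corresponding summand on the left. Thus each summand of $\widetilde h_{\ell-1,m,0}(\cdot;0)$ equals $(-1)^{\ell m}2(m+1)$ times the matching term of the integral, and summing over all $(c_j)$ gives the claimed identity with $c_{\ell,m}=(-1)^{\ell m}2(m+1)$.
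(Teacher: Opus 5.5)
Your proposal is correct and follows essentially the same route as the paper: set $z=0$ using $P_0(a,r)=a^{2r+1}$, expand the $y_j$-factors binomially, evaluate $\int_0^x t^{2C}(t^2-x^2)^m\,dt$ via the Beta function, identify it with $\binom{m+C+\frac12}{m+1}^{-1}$, and compare signs to get $(-1)^{\ell m}2(m+1)$. One small difference is that you match summands term by term over the index tuples $(c_1,\dots,c_{\ell-1})$, so you do not need the paper's remark that the $y_j$-exponents determine the tuple.
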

	
	\begin{proof}
		It is enough to compare one coefficient at a time.  
		Fix $i$, rename
		$x_i$ as $x$, and denote the other variables by
		$y_1,\dots,y_{\ell-1}$.
		Let $C= C_{\ell-1}$.
		Since
		$
		P_z(u,r)\big|_{z=0}=u^{2r+1},
		$
		we have
		$
		P_z(x,m+C)\big|_{z=0}=x^{2m+2C+1}.
		$
		Since $C_j=C_{j-1}+c_j$,
		\begin{equation*}
			\left.
			\frac
			{ P_z(y_j,m+C_{j-1}) }
			{ P_z(y_j,C_j) }
			\right|_{z=0}
			=y_j^{2(m+C_{j-1})+1-(2C_j+1)} 
			=y_j^{2(m-c_j)}.
		\end{equation*}
		Consequently, the coefficient of
		$
		x^{2m+2C+1}\prod_{j=1}^{\ell-1}y_j^{2(m-c_j)}
		$
		in $\res(\widetilde F_{\ell,m,0,i})$ is
		\begin{equation}
			\label{eq:L}
			L_C
			=\frac{(-1)^C \prod_{j=1}^{\ell-1}\binom{m}{c_j}}
			{\binom{m+C+\frac12}{m+1}}.
		\end{equation}
		
		Consider
		the polynomial
		$
		\int_0^x(t^2-x^2)^m
		\prod_{j=1}^{\ell-1}(t^2-y_j^2)^m dt
		$
		of
		$\eta_0^{(m)}$.
		For each $j$,
		$
		(t^2-y_j^2)^m
		=\sum_{c_j=0}^{m}
		(-1)^{m-c_j}\binom{m}{c_j}
		t^{2c_j}y_j^{2(m-c_j)}.
		$
		Thus the coefficient of
		$\prod_jy_j^{2(m-c_j)}$ in the product of the $y_j$-factors is
		\begin{equation}
			\label{eq:y-coefficient}
			(-1)^{(\ell-1)m-C}
			\prod_{j=1}^{\ell-1}\binom{m}{c_j}\,t^{2C}.
		\end{equation}
		
		Expand the remaining factor as
		$
		(t^2-x^2)^m
		=\sum_{r=0}^{m}
		(-1)^{m-r}\binom{m}{r}t^{2r}x^{2(m-r)}.
		$
		After multiplication by $t^{2C}$, 
		the $r$-th term contributes
		\begin{equation*}
			\int_0^x t^{2(C+r)}x^{2(m-r)} dt
			=\frac{x^{2(C+r)+1+2(m-r)}}{2(C+r)+1}
			=\frac{x^{2m+2C+1}}{2(C+r)+1}.
		\end{equation*}
		In particular, every $r$ contributes to the same power of $x$ appearing
		in 
		$
		x^{2m+2C+1}\prod_{j=1}^{\ell-1}y_j^{2(m-c_j)}
		$.  
		Note that
		$\int_0^x t^{2C}(t^2-x^2)^m dt = M_C \cdot x^{2m+2C+1}$
		where $M_C= \int_0^1 t^{2C}(t^2-1)^m dt$.
		By the substitution $s=t^2$, 
		we have
		\begin{align}
			\label{eq:MC-beta}
			M_C
			=\int_0^1 t^{2C}(t^2-1)^m dt 
			=\frac{(-1)^m}{2}
			\int_0^1s^{C-\frac12}(1-s)^m ds 
			=\frac{(-1)^m}{2}
			B\left(C+\frac12,m+1\right),
		\end{align}
		where $B$ is the beta function.  
		Using
		\[
		B\left(C+\frac12,m+1\right)
		=\frac{\Gamma(C+\frac12)\Gamma(m+1)}
		{\Gamma(C+m+\frac32)}
		=\frac{1}
		{(m+1)\binom{m+C+\frac12}{m+1}},
		\]
		equation \eqref{eq:MC-beta} becomes
		\begin{equation}
			\label{eq:SC-expansion}
			M_C=
			\frac{(-1)^m}
			{2(m+1)\binom{m+C+\frac12}{m+1}}.
		\end{equation}
		
		Combining \eqref{eq:y-coefficient} and \eqref{eq:SC-expansion}, the coefficient
		of the monomial 
		$
		x^{2m+2C+1}\prod_{j=1}^{\ell-1}y_j^{2(m-c_j)}
		$ in 
		$
		\int_0^x(t^2-x^2)^m
		\prod_{j=1}^{\ell-1}(t^2-y_j^2)^m dt
		$
		is
		\begin{equation}
			\label{eq:C}
			N_C
			=\frac{(-1)^{\ell m-C}
				\prod_{j=1}^{\ell-1}\binom{m}{c_j}}
			{2(m+1)\binom{m+C+\frac12}{m+1}}.
		\end{equation}
		Comparing \eqref{eq:L} and \eqref{eq:C} yields
		$
		\frac{L_C}{N_C}
		=2(m+1)(-1)^{C-(\ell m-C)}
		=(-1)^{\ell m}2(m+1).
		$
		
		The exponents of the $y_j$ uniquely determine the tuple
		$(c_1,\dots,c_{\ell-1})$, and these tuples run over all monomials in both
		polynomials.  Hence, we have
		$
		\res(\widetilde F_{\ell,m,0,i})
		=(-1)^{\ell m}2(m+1)
		\int_0^{x_i}\prod_{k=1}^{\ell}(t^2-x_k^2)^m dt,
		$
		and
		$
		\res(\widetilde\kappa_{\ell,m})
		=(-1)^{\ell m}2(m+1)\eta_0^{(m)}.
		$
	\end{proof}
	
	Define $(\tau_v f) (x,z) = f(x-zv,z)$, for $f\in \widetilde{S}$, $v\in\C^\ell$.
	We can apply $\tau_v$ coefficientwise to vector field $\eta=\sum_i f_i \partial_{x_i}$.
	Here $\eta (z) = 0$.
	We have $\rho_v^z \eta =\displaystyle\frac{\eta-\tau_v \eta }{z}.$
	
	\begin{lemma}
		\label{lem:transfer}
		Let $v\in\C^\ell$ and suppose that
		$\alpha(v)\in\{0,1\}$ for every $\alpha\in\Phi^+(B_\ell)$.  If
		$\eta\in \D_0(\cCat(B_\ell,m))$ is homogeneous of degree $q$, then
		$
		\rho_v^z\eta\in \D_0(\cShi(B_\ell,m))_{q-1}.
		$
	\end{lemma}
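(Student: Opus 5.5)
The plan is a direct hyperplane-by-hyperplane check. Write $\eta=\sum_i f_i\partial_{x_i}$ with $\eta(z)=0$. Then $\rho_v^z\eta$ also annihilates $z$, it is homogeneous of degree $q-1$, and its coefficients are polynomials (as noted after \eqref{eq:dif}). So it suffices to show that for every $\alpha\in\Phi^+(B_\ell)$ and every $k$ with $1-m\le k\le m$, the polynomial $(\rho_v^z\eta)(\alpha)$ is divisible by $\alpha-kz$. The hyperplane $z=0$ needs no check, because $(\rho_v^z\eta)(z)=0$.

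Since $\alpha$ is linear and $\rho_v^z$ acts coefficientwise,
\[
(\rho_v^z\eta)(\alpha)=\frac{\eta(\alpha)-\tau_v(\eta(\alpha))}{z}.
\]
Put $g=\eta(\alpha)$. Because $\eta\in\D_0(\cCat(B_\ell,m))$, $g$ vanishes on every hyperplane $\alpha-jz=0$ with $-m\le j\le m$. We have $\tau_v g(x,z)=g(x-zv,z)$, and on the hyperplane $\alpha(x)=kz$ the shifted point satisfies $\alpha(x-zv)=(k-\alpha(v))z$. We split into the two cases allowed by the hypothesis.

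If $\alpha(v)=0$, then for every $k\in\{1-m,\dots,m\}\subset\{-m,\dots,m\}$ both $g$ and $\tau_v g$ vanish on $\alpha=kz$. If $\alpha(v)=1$, then $g$ vanishes on $\alpha=kz$ because $|k|\le m$. Also $\tau_v g$ vanishes there, since the shifted point lies on $\alpha=(k-1)z$ and $k-1\in\{-m,\dots,m-1\}$. This is exactly where the Shi range $1-m\le k\le m$ is used rather than the Catalan range.

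In either case the numerator $g-\tau_v g$ vanishes on the hyperplane $\alpha-kz=0$, so it is divisible by the irreducible linear form $\alpha-kz$. For $\alpha\in\Phi^+(B_\ell)$ and $k$ as above, $\alpha-kz$ is coprime to $z$. Hence after dividing by $z$ the quotient is still divisible by $\alpha-kz$. This proves $\rho_v^z\eta\in\D_0(\cShi(B_\ell,m))_{q-1}$.

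The only step that requires care is the vanishing of $\tau_v g$: one has to check that the shift by $zv$ maps each Shi hyperplane onto a Catalan hyperplane. This is the index computation in the case $\alpha(v)=1$ above, and it is immediate. The remaining steps, namely that divisibility survives division by $z$ and that the result has degree $q-1$, are routine.
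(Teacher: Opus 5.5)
Your proof is correct and takes essentially the same route as the paper. Both arguments use that $\tau_v$ commutes with evaluation on the linear form $\alpha$, that the shift by $zv$ carries each Shi hyperplane $\alpha=kz$ onto the Catalan hyperplane $\alpha=(k-\alpha(v))z$, and that $z$ is coprime to the relevant factors. The only (cosmetic) difference is that you check vanishing one hyperplane at a time, whereas the paper first combines the factors into $C_m(\alpha,z)\mid\eta(\alpha)$ and then computes $\gcd\bigl(C_m(t,z),C_m(t-z,z)\bigr)=R_m(t,z)$.
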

	
	\begin{proof}
		For every $\alpha\in\Phi^+(B_\ell)$ and $-m\leq k\leq m$,
		$
		\eta(\alpha-kz)=\eta(\alpha),
		$
		because $\eta(z)=0$.  Membership in $\D_0(\cCat)$ implies
		that every factor $\alpha-kz$ divides $\eta(\alpha)$.  These factors are
		pairwise relatively prime, so
		$C_m(\alpha,z)\mid\eta(\alpha)$.
		
		If $\alpha=\sum_{i=1}^{\ell}c_i x_i$ is a linear form,
		$\eta=\sum_i f_i \partial_{x_i}$, 
		then the constants
		$c_i=\partial_{x_i}\alpha$ are fixed by $\tau_v$.  
		Hence $\tau_v$ commutes with evaluation of $\eta$ on $\alpha$:
		\begin{equation}
			\label{eq:tau-evaluation}
			(\tau_v\eta)(\alpha)
			=\sum_{i=1}^{\ell}\tau_v(f_i)\partial_{x_i}(\alpha)
			=\sum_{i=1}^{\ell}\tau_v(f_i) c_i
			=\tau_v\left(\sum_{i=1}^{\ell}f_i c_i\right)
			=\tau_v\bigl(\eta(\alpha)\bigr).
		\end{equation}
		Set $\varepsilon=\alpha(v)$.
		The commutation identity shows
		\begin{equation}
			\label{eq:tau-alpha}
			(\tau_v\eta)(\alpha)=\tau_v\bigl(\eta(\alpha)\bigr)
			=\eta(\alpha)(x-zv,z).
		\end{equation}
		Moreover,
		$
		\alpha(x-zv)=\alpha(x)-z\alpha(v)=\alpha(x)-\varepsilon z.
		$
		Since $C_m(\alpha,z)\mid\eta(\alpha)$,
		we have
		$
		C_m(\alpha-\varepsilon z,z)
		\mid (\tau_v\eta)(\alpha).
		$
		If $\varepsilon=0$, 
		both $\eta(\alpha)$ and $(\tau_v\eta)(\alpha)$ are divisible by $C_m(\alpha,z)$, 
		and hence by
		$R_m(\alpha,z)$.  
		Suppose now that $\varepsilon=1$.  
		For an indeterminate $t$,
		the common factors 
		of $C_m(t,z)$ and $C_m(t-z,z)$
		are precisely those with indices
		$1-m,\dots,m$.  
		Hence, their greatest common divisor is
		$\prod_{s=1-m}^{m}(t-sz)
		=R_m(t,z).$
		Thus, in both cases $\varepsilon\in\{0,1\}$, 
		we have
		$
		R_m(\alpha,z)\mid
		\eta(\alpha)-(\tau_v\eta)(\alpha).
		$
		
		Since the polynomial $z$ is relatively prime to $R_m(\alpha,z)$,
		we have 
		$
		R_m(\alpha,z)\mid (\rho_v^z \eta )(\alpha).
		$
		This proves
		$
		\rho_v^z\eta
		\in \D_0(\cShi(B_\ell,m))_{q-1}.
		$
	\end{proof}
	
	\begin{remark}
		\label{re:v}
		If $v= \sum_i v_i e_i \in\C^\ell$ satisfies
		$\alpha(v)\in\{0,1\}$ for every
		$\alpha\in\Phi^+(B_\ell)$, 
		then $v=0$ or $v=e_1$.
		The coordinate roots give $v_i\in\{0,1\}$.  
		The roots $x_i+x_j$ imply
		that at most one coordinate equals $1$.  
		If $v_r=1$ for some $r>1$, 
		then
		$x_i-x_r$ has value $-1$ for every $i<r$, 
		a contradiction.  
		Hence the
		only nonzero possibility is $e_1$.
	\end{remark}
	
	Let
	$f\in \widetilde{S}$ 
	and set $\partial_v=\sum_i v_i\partial_{x_i}$.  
	Since $f$ is a
	polynomial, 
	its Taylor expansion in the $x$-direction $v$ is finite:
	\begin{equation}
		\label{eq:f-Taylor}
		f(x-zv,z)
		=\sum_{i\geq0}\frac{(-z)^i}{i!}
		(\partial_v^i f)(x,z).
	\end{equation}
	Subtracting \eqref{eq:f-Taylor} from $f(x,z)$ and dividing by $z$ gives
	\begin{equation}
		\label{eq:rho-Taylor}
		\rho_v^z f
		=\sum_{i\geq1}
		\frac{(-1)^{i+1}z^{i-1}}{i!}(\partial_v^i f)(x,z).
	\end{equation}
	After setting $z=0$, only the term $i=1$ remains.  
	Therefore
	\begin{equation}
		\label{eq:res-rho}
		\res(\rho_v^z f)
		=(\partial_v f)(x,0)
		=\partial_v(f(x,0))
		=\partial_v(\res f).
	\end{equation} 
	Applying \eqref{eq:res-rho} to every coefficient of
	$\eta=\sum_i f_i \partial_{x_i}$ yields
	\begin{equation}
		\label{eq:res-rho-commutation}
		\res(\rho_v^z\eta)
		=\sum_i\res(\rho_v^zf_i)\partial_{x_i}
		=\sum_i\partial_v(\res f_i)\partial_{x_i}
		=\bigtriangledown_{\partial_v}(\res\ \eta).
	\end{equation}
	
	\begin{theorem}
		\label{thm:res-Phi}
		Let
		$
		\lambda_{\ell,m}=\frac{(-1)^{\ell m+1}}{4m(m+1)},
		\Phi_1^{(m)}=\lambda_{\ell,m}\rho_{e_1}^z
		\widetilde\kappa_{\ell,m}.
		$
		The derivation $\Phi_1^{(m)}$ satisfies
		$
		\Phi_1^{(m)}\in \D_0(\cShi(B_\ell,m))_d,$
		and
		$
		\res(\Phi_1^{(m)})=\theta_1^{(m)}.
		$
		Consequently,
		$
		\Phi_1^{(m)}=\res_d^{-1}(\theta_1^{(m)}).
		$
	\end{theorem}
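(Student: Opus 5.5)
The claim follows by chaining Theorem~\ref{thm:kawanoue}, Lemma~\ref{lem:transfer}, the commutation \eqref{eq:res-rho-commutation}, Lemma~\ref{lem:res-k} and Lemma~\ref{lem:eta0}, and then invoking Yoshinaga's isomorphism \eqref{eq:res-d}.

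\emph{Membership.} By Theorem~\ref{thm:kawanoue}, $\widetilde\kappa_{\ell,m}$ is homogeneous of degree $d+1$ and lies in $\D_0(\cCat(B_\ell,m))$. The vector $v=e_1$ satisfies $\alpha(e_1)\in\{0,1\}$ for every $\alpha\in\Phi^+(B_\ell)$. Indeed, $x_1(e_1)=1$ and $(x_1\pm x_j)(e_1)=1$, while all other positive roots vanish on $e_1$; this is the nonzero case singled out in Remark~\ref{re:v}. Lemma~\ref{lem:transfer} therefore gives $\rho_{e_1}^z\widetilde\kappa_{\ell,m}\in\D_0(\cShi(B_\ell,m))_d$. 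Multiplying by the constant $\lambda_{\ell,m}$ preserves this, so $\Phi_1^{(m)}\in\D_0(\cShi(B_\ell,m))_d$.

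\emph{Restriction.} By \eqref{eq:res-rho-commutation} with $v=e_1$, so that $\partial_v=\partial_{x_1}$, we have $\res(\rho_{e_1}^z\widetilde\kappa_{\ell,m})=\bigtriangledown_{\partial_{x_1}}\res(\widetilde\kappa_{\ell,m})$. Lemma~\ref{lem:res-k} gives $\res(\widetilde\kappa_{\ell,m})=c_{\ell,m}\eta_0^{(m)}$, and the connection is linear, so this equals $c_{\ell,m}\bigtriangledown_{\partial_{x_1}}\eta_0^{(m)}$. By Lemma~\ref{lem:eta0} this is $-2m\,c_{\ell,m}\theta_1^{(m)}$. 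The constant is then
\[
\lambda_{\ell,m}\cdot(-2m)\cdot(-1)^{\ell m}2(m+1)=\frac{(-1)^{\ell m+1}(-1)^{\ell m+1}4m(m+1)}{4m(m+1)}=1,
\]
so $\res(\Phi_1^{(m)})=\theta_1^{(m)}$.

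\emph{Conclusion.} Since $\res_d$ is a linear isomorphism by Yoshinaga \eqref{eq:res-d}, and $\Phi_1^{(m)}$ lies in its domain with image $\theta_1^{(m)}$, we get $\Phi_1^{(m)}=\res_d^{-1}(\theta_1^{(m)})$.

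There is no real obstacle here. The only points that need care are checking the sign bookkeeping so the constant is exactly $1$, and confirming that $\rho^z$ and $\res$ are applied coefficientwise to a derivation annihilating $z$, so that \eqref{eq:res-rho-commutation} applies.
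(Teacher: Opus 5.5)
Your proposal is correct and follows the paper's own proof step for step. Membership comes from Theorem~\ref{thm:kawanoue} and Lemma~\ref{lem:transfer} with $v=e_1$. The restriction identity comes from \eqref{eq:res-rho-commutation}, Lemma~\ref{lem:res-k} and Lemma~\ref{lem:eta0}, with the constant $-2m\,\lambda_{\ell,m}c_{\ell,m}=1$ checked just as you did, and your final appeal to the isomorphism $\res_d$ simply spells out the ``consequently'' that the paper leaves implicit.
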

	
	\begin{proof}
		By Theorem \ref{thm:kawanoue}, Lemma \ref{lem:transfer}, and Remark \ref{re:v},
		we have $
		\Phi_1^{(m)}\in \D_0(\cShi(B_\ell,m))_d$.
		Lemma
		\ref{lem:eta0} and \ref{lem:res-k} together with equation \eqref{eq:res-rho-commutation} give
		\begin{align}
			\label{eq:res-Phi}
			\res (\Phi_1^{(m)} )
			&=\lambda_{\ell,m} 
			\res ( \rho_{e_1}^z \widetilde\kappa_{\ell,m} ) \notag\\
			&=\lambda_{\ell,m} \bigtriangledown_{\partial_{x_1}}
			\res(\widetilde\kappa_{\ell,m})\notag\\
			&=\lambda_{\ell,m} c_{\ell,m} \bigtriangledown_{\partial_{x_1}}
			\eta_0^{(m)}\notag\\
			&=-2m \lambda_{\ell,m} c_{\ell,m} \theta_1^{(m)}\notag\\
			&=\theta_1^{(m)}.
		\end{align}
	\end{proof}
	
	\begin{theorem}
		\label{thm:derivation-recursion}
		Let $\Phi_r^{(m)}$ be defined by the preceding initial conditions and recursion  \eqref{eq:Phi-recursion},
		and let the derivations $\varphi_r^+$ be a simple-root basis plus
		introduced in Section \ref{subsec:Weyl-group}.
		For $0\leq r\leq\ell$,
		we have 
		$
		\Phi_r^{(m)}=\varphi_r^+.
		$
		Consequently, 
		every division by $\alpha_r$ in
		\eqref{eq:Phi-recursion} is coefficientwise exact, 
		and
		$
		\Phi_r^{(m)}\in \D_0(\cShi(B_\ell,m))_d
		\ (1\leq r\leq\ell).
		$
		Moreover, if
		$
		\Theta_i^{(m)}=\Phi_i^{(m)}-\Phi_{i-1}^{(m)},
		$
		then
		$
		\Theta_i^{(m)}=\mathcal L_m(e_i),
		\res_d(\Theta_i^{(m)})=\theta_i^{(m)}
		\ (1\leq i\leq\ell).
		$
	\end{theorem}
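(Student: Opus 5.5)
Proof plan: induction on $r$. We show $\Phi_r^{(m)}=\varphi_r^+$ for $0\le r\le\ell$, using the identity \eqref{eq:sr-m-Euler} as the engine. The base cases are $r=0$ and $r=1$. By definition $\Phi_0^{(m)}=0=\varphi_0^+$. For $r=1$ we have $\alpha_1^*=e_1$, so $\varphi_1^+=\mathcal L_m(e_1)=\res_d^{-1}(\Xi_m(e_1))=\res_d^{-1}(\theta_1^{(m)})$. Theorem~\ref{thm:res-Phi} gives exactly $\Phi_1^{(m)}=\res_d^{-1}(\theta_1^{(m)})$, so $\Phi_1^{(m)}=\varphi_1^+$.

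For the inductive step, fix $1\le r<\ell$ and assume $\Phi_{r-1}^{(m)}=\varphi_{r-1}^+$ and $\Phi_r^{(m)}=\varphi_r^+$. We specialize \eqref{eq:sr-m-Euler} to type $B_\ell$.
\begin{itemize}
\item For $1\le r<\ell$ the root $\alpha_r=x_r-x_{r+1}$ is long, with $(\alpha_r,\alpha_r)=2$.
\item The nonzero inner products with other simple roots are $(\alpha_r,\alpha_{r\pm1})=-1$. This includes $j=r+1=\ell$, since $(x_{\ell-1}-x_\ell,x_\ell)=-1$.
\item Hence $c_{rj}=-1$ for $j=r\pm1$ (whenever $1\le j\le\ell$) and $c_{rj}=0$ otherwise.
\end{itemize}
Using $\varphi_0^+=0$ to cover $r=1$ uniformly, \eqref{eq:sr-m-Euler} becomes
\[
(mz-\alpha_r)s_r(\varphi_r^+)=(mz+\alpha_r)\varphi_r^+-\alpha_r\bigl(\varphi_{r-1}^++\varphi_{r+1}^+\bigr).
\]
Solving for $\varphi_{r+1}^+$ gives
\[
\varphi_{r+1}^+=\frac{(\alpha_r+mz)\varphi_r^+-(mz-\alpha_r)s_r\varphi_r^+}{\alpha_r}-\varphi_{r-1}^+.
\]
This is the recursion \eqref{eq:Phi-recursion} with $\Phi$ replaced by $\varphi^+$. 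We also need that the operator $s_r$ in \eqref{eq:reflection-action} coincides with the $W$-action \eqref{eq:s-r-action}; this holds because all signs are $+1$ for $r<\ell$. Substituting the induction hypothesis then gives $\Phi_{r+1}^{(m)}=\varphi_{r+1}^+$, an identity in the fraction field.

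The consequences follow directly. Since $\varphi_{r+1}^+$ is a polynomial derivation, the numerator $(\alpha_r+mz)\Phi_r-(mz-\alpha_r)s_r\Phi_r$ equals $\alpha_r(\varphi_{r+1}^++\varphi_{r-1}^+)$, so the division by $\alpha_r$ is coefficientwise exact. Membership $\Phi_r^{(m)}\in\D_0(\cShi(B_\ell,m))_d$ holds because $\mathcal L_m$ lands there. For $\Theta_i^{(m)}$, linearity of $\mathcal L_m$ and $\alpha_i^*-\alpha_{i-1}^*=e_i$ (with $\alpha_0^*=0$) give $\Theta_i^{(m)}=\mathcal L_m(e_i)$. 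Therefore $\res_d(\Theta_i^{(m)})=\Xi_m(e_i)=\theta_i^{(m)}$.

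The main obstacle is bookkeeping rather than conceptual. One point is that the Cartan integers at the boundary are correct: for $r=\ell-1$ the neighbour $\alpha_\ell$ is short, but we only use $c_{r,\ell}$ with $\alpha_r$ long, which is $-1$. The other point is that the recursion never involves $s_\ell$, so the reflection at the short root is not needed. I would check the case $r=\ell-1$ explicitly.
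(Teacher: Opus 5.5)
Your proposal is correct and follows the paper's proof essentially step for step. You use the base cases $\Phi_0^{(m)}=\varphi_0^+=0$ and $\Phi_1^{(m)}=\mathcal L_m(e_1)=\varphi_1^+$ via Theorem~\ref{thm:res-Phi}, specialize \eqref{eq:sr-m-Euler} with the type $B_\ell$ Cartan integers $c_{r,r\pm1}=-1$ to recover \eqref{eq:Phi-recursion}, induct, and then get $\Theta_i^{(m)}=\mathcal L_m(\alpha_i^*-\alpha_{i-1}^*)=\mathcal L_m(e_i)$. Your extra checks — the boundary value $c_{\ell-1,\ell}=-1$, and that only the unsigned reflections $s_r$ with $r<\ell$ enter — are accurate and only make explicit what the paper leaves implicit.
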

	
	\begin{proof}
		For $1\leq r<\ell$, 
		the root
		$\alpha_r=x_r-x_{r+1}$ has square length $2$.  
		For $j \neq r$,
		direct
		calculation gives
		\begin{equation}
			\label{eq:B-inner-product}
			\frac{2(\alpha_r,\alpha_j)}{(\alpha_r,\alpha_r)}
			=
			\begin{cases}
				-1,&j=r-1\text{ and }r>1,\\
				-1,&j=r+1,\\
				0,&\text{otherwise}.
			\end{cases}
		\end{equation}
		
		With the set $\varphi_0^+=0$, equation
		\eqref{eq:B-inner-product} turns
		\eqref{eq:sr-m-Euler} into
		\begin{equation}
			\label{eq:sr-m-Euler-exact}
			(mz-\alpha_r)s_r(\varphi_r^+)
			=(mz+\alpha_r)\varphi_r^+
			-\alpha_r\bigl(\varphi_{r-1}^++\varphi_{r+1}^+\bigr).
		\end{equation}
		Equivalently,
		\begin{equation}
			\label{eq:exact-divisibility}
			(mz+\alpha_r)\varphi_r^+
			-(mz-\alpha_r)s_r(\varphi_r^+)
			=\alpha_r\bigl(\varphi_{r-1}^++\varphi_{r+1}^+\bigr).
		\end{equation}
		It proves, coefficient by coefficient, that the
		left-hand side is divisible by $\alpha_r$.  Solving
		\eqref{eq:exact-divisibility} for $\varphi_{r+1}^+$ gives
		\begin{equation}
			\label{eq:AT-recursion-solved}
			\varphi_{r+1}^+
			=\frac{
				(\alpha_r+mz)\varphi_r^+
				-(mz-\alpha_r)s_r(\varphi_r^+)
			}{\alpha_r}
			-\varphi_{r-1}^+.
		\end{equation}
		
		It remains to compare initial conditions.  
		By
		Theorem \ref{thm:res-Phi} and the definitions of $\mathcal{L}_m, \varphi_r^+$,
		$
		\Phi_1^{(m)}=\mathcal{L}_m(e_1)=\varphi_1^+,
		$
		while $\Phi_0^{(m)}=\varphi_0^+=0$. 
		Suppose for some
		$1\leq r<\ell$ that
		$
		\Phi_{r-1}^{(m)}=\varphi_{r-1}^+,
		\Phi_r^{(m)}=\varphi_r^+.
		$
		Substitution into \eqref{eq:Phi-recursion}, 
		followed by
		\eqref{eq:AT-recursion-solved}, 
		gives
		$\Phi_{r+1}^{(m)}=\varphi_{r+1}^+$.  
		Induction proves
		$\Phi_{r}^{(m)}=\varphi_{r}^+$ for $0\leq r\leq\ell$.  
		Since each
		$\varphi_r^+$ belongs to $\D_0(\cShi(B_\ell,m))_d$, 
		polynomiality and homogeneity of degree $d$ follow at the same time.
		
		Set $\alpha_0^*=0$.  
		Note that
		$e_i=\alpha_i^*-\alpha_{i-1}^*$.  
		By linearity of $\mathcal L_m$,
		we have
		$
		\Theta_i^{(m)}
		=\Phi_i^{(m)}-\Phi_{i-1}^{(m)}
		=\mathcal L_m(\alpha_i^*-\alpha_{i-1}^*)
		=\mathcal L_m(e_i).
		$
		Applying $\res_d$ and using
		$\res_d\circ\mathcal L_m=\Xi_m$ yields
		$
		\res_d(\Theta_i^{(m)})
		=\Xi_m(e_i)=\theta_i^{(m)}.
		$
	\end{proof}
	
	By the above theorem, we give the explicit construction of 
	simple-root basis plus $\varphi_r^+= \Phi_{r}^{(m)}$. 
	This partially answers the question posed by Abe and Terao.
	We can now prove main theorem \ref{thm:main}.
	
	\begin{proof}[Proof of Theorem \ref{thm:main}]
		By Theorem \ref{thm:FWY}, \ref{thm:restriction}, and \ref{thm:derivation-recursion},
		the proof is complete.
	\end{proof}
	
	\section{Examples}
	
	Consider the case $B_2$, $m=1$.
	Set $x=x_1$ and $y=x_2$.  
	Recall that
	$
	P_z(a,r)=\prod_{k=-r}^{r}(a-kz)
	=a\prod_{k=1}^{r}(a^2-k^2z^2).
	$
	In particular,
	\begin{equation*}
		\begin{aligned}
			P_z(a,0)&=a,\\
			P_z(a,1)&=a(a^2-z^2),\\
			P_z(a,2)&=a(a^2-z^2)(a^2-4z^2).
		\end{aligned}
	\end{equation*}
	In equation \eqref{eq:k-h}, there is only one summation index
	$c=c_1\in\{0,1\}$, with $C_0=0$ and $C_1=c$.  Therefore
	\begin{equation*}
		\widetilde h_{1,1,0}(x;y;z)
		=\sum_{c=0}^{1}
		\frac{(-1)^cP_z(x,1+c)}
		{\binom{1+c+\frac12}{2}}
		\binom{1}{c}
		\frac{P_z(y,1)}{P_z(y,c)}.
	\end{equation*}
	For $c=0$, we have
	$
	\binom{\frac32}{2}
	=\frac{(3/2)(1/2)}{2}=\frac38,
	\frac{P_z(y,1)}{P_z(y,0)}=y^2-z^2.
	$
	Thus the $c=0$ summand is
	$
	T_0
	=\frac{P_z(x,1)}{3/8}
	\frac{P_z(y,1)}{P_z(y,0)}\\
	=\frac83x(x^2-z^2)(y^2-z^2).
	$
	For $c=1$, we have
	$
	\binom{\frac52}{2}
	=\frac{(5/2)(3/2)}{2}=\frac{15}{8},
	\frac{P_z(y,1)}{P_z(y,1)}=1.
	$
	So the second summand is
	$
	T_1
	=-\frac{P_z(x,2)}{15/8}
	=-\frac8{15}x(x^2-z^2)(x^2-4z^2).
	$
	Combining $T_0$ and $T_1$, we obtain
	\begin{equation*}
		\widetilde h_{1,1,0}(x;y;z)
		=T_0+T_1
		=\frac8{15}x(x^2-z^2)
		\left[5(y^2-z^2)-(x^2-4z^2)\right]
		=\frac8{15}x(x^2-z^2)(5y^2-x^2-z^2).
	\end{equation*}
	By definition,
	\begin{align*}
		\widetilde\kappa_{2,1}
		&=\widetilde h_{1,1,0}(x;y;z)\partial_x
		+\widetilde h_{1,1,0}(y;x;z)\partial_y \\
		&=\frac8{15}
		\Bigl[
		(-x^5+5x^3y^2-5xy^2z^2+xz^4)\partial_x
		+(-y^5+5x^2y^3-5x^2yz^2+yz^4)\partial_y
		\Bigr].
	\end{align*}
	Restriction to the hyperplane $z=0$ gives
	\begin{equation*}
		\res(\widetilde\kappa_{2,1})
		=\frac8{15}\left[
		x^3(5y^2-x^2)\partial_x
		+y^3(5x^2-y^2)\partial_y
		\right].
	\end{equation*}
	Indeed,
	\begin{equation*}
		\int_0^x(t^2-x^2)(t^2-y^2)dt
		=\frac2{15}x^3(5y^2-x^2).
	\end{equation*}
	We have
	$
	\res(\widetilde\kappa_{2,1}) =4\eta_0^{(1)},
	$
	in agreement with
	$c_{2,1}=(-1)^{2}\,2(1+1)=4$.
	Let $A(x,y,z)=x(x^2-z^2)(5y^2-x^2-z^2)$, $B(x,y,z)=y(y^2-z^2)(5x^2-y^2-z^2)$.
	We have
	$
	\widetilde\kappa_{2,1}=\frac8{15}(A(x,y,z)\partial_x+B(x,y,z)\partial_y).
	$
	For $\ell=2$ and $m=1$, 
	$\lambda_{2,1}=\frac{(-1)^{2\cdot1+1}}{4\cdot1\cdot(1+1)}=-\frac18.$
	Introduce the two vector fields
	\begin{align*}
		\Psi_1=\ 
		&
		x(x-z)(-x^2+xz+3y^2-z^2)\partial_x
		+y(y-z)(2x-z)(y+z)\partial_y, \\
		\Psi_2=\ 
		&
		x(x-z)(2x-z)(y-z)\partial_x
		+y(y-z)(3x^2-3xz+z^2-y^2)\partial_y.
	\end{align*}
	In fact, by Proposition \ref{prop:szc}, $\theta_E$, $\Psi_1$, $\Psi_2$ form a basis for $\D(\cShi(B_2,1))$.
	Direct calculations give
	\begin{align*}
		&\rho_{e_1}^z A(x,y,z)
		=\ 5x(x-z)(-x^2+xz+3y^2-z^2).\\
		&\rho_{e_1}^z B(x,y,z)
		\ =5y(y-z)(2x-z)(y+z).
	\end{align*}
	We obtain
	$
	\Phi_1^{(1)}
	=\lambda_{2,1}\rho_{e_1}^{z}
	\widetilde\kappa_{2,1}
	=-\frac18\cdot\frac8{15}\cdot5\Psi_1
	=-\frac13\Psi_1.
	$
	
	Let $s_1$ interchange $x$ with $y$ and $\partial_x$ with $\partial_y$.  
	Thus
	$
	s_1(\Psi_1)= 
	x(x-z)(2y-z)(x+z)\partial_x
	+y(y-z)(-y^2+yz+3x^2-z^2)\partial_y.
	$
	Note that $\alpha_1 =x-y$. 
	By equation \eqref{eq:Phi-recursion}, we obtain
	\begin{align*}
		\Phi_2^{(1)}
		&=\frac{(x-y+z)\Phi_1^{(1)}
			-(z-x+y)s_1(\Phi_1^{(1)})}{x-y},\\
		&=-\frac13
		\frac{(x-y+z)\Psi_1-(z-x+y)s_1(\Psi_1)}{x-y} \\
		&=-\frac13\Bigl[
		x(x-z)(-x^2+2xy+3y^2-xz-yz)\partial_x
		+y(y-z)(3x^2+2xy-y^2-xz-yz)\partial_y\Bigr],\\
		&=-\frac13(\Psi_1+\Psi_2).
	\end{align*}
	
	Since
	$
	\Theta_i^{(1)}=\Phi_i^{(1)}-\Phi_{i-1}^{(1)},
	$
	we have
	\begin{align*}
		&\Theta_1^{(1)}
		=\Phi_1^{(1)}=-\frac13\Psi_1,\\
		&\Theta_2^{(1)}
		=\Phi_2^{(1)}-\Phi_1^{(1)} 
		=-\frac13\Psi_2, \\
		&\res(\Theta_1^{(1)})
		=(\tfrac13x^4-x^2y^2)\partial_x
		-\tfrac23xy^3\partial_y 
		=\theta_1^{(1)} ,\\
		&\res(\Theta_2^{(1)})
		=-\tfrac23x^3y\partial_x
		+(\tfrac13y^4-x^2y^2)\partial_y
		=\theta_2^{(1)} .
	\end{align*}
	The derivations $\Theta_1^{(1)}, \Theta_2^{(1)}$ together with the Euler derivation $\theta_E$ form a basis 
	for $\D(\cShi(B_2,1))$.

\end{document}